%
%
%
%
\documentclass[12pt]{amsart}
\usepackage{mathrsfs}
\usepackage{amssymb}
\usepackage{amsbsy}
\textwidth=16 true cm \textheight=22 true cm \pagestyle{plain}

\overfullrule=0pt
\oddsidemargin=0pt
\evensidemargin=0pt
\topmargin=0pt

\newtheorem{theorem}{Theorem}[section]
\newtheorem{lemma}[theorem]{Lemma}
\newtheorem{corollary}[theorem]{Corollary}
\newtheorem{proposition}[theorem]{Proposition}

\theoremstyle{definition}

\newtheorem{example}[theorem]{Example}

\theoremstyle{remark}

\numberwithin{equation}{section}

\usepackage{cite}



\begin{document}

\begin{center}
{\Large \bf  A unified treatment for  $\boldsymbol{L_p}$
Brunn-Minkowski\\ type inequalities}
\end{center}

\vskip 20pt

\begin{center}
{{\bf Du~~Zou$^1$,~~~~~~~~~Ge~~Xiong$^2$}\\~~ \\1. Department of Mathematics,
Wuhan University of Science and Technology, \\ Wuhan, 430081, PR China\\
2. Department of Mathematics, Tongji University, Shanghai, 200092, PR China}
\end{center}

\vskip 10pt

\footnotetext{E-mail addresses: zoudu@wust.edu.cn; xiongge@tongji.edu.cn} \footnotetext{Research of the authors was supported by NSFC No. 11471206.}

\begin{center}
\begin{minipage}{14cm}
{\bf Abstract}  A unified approach used to generalize classical
Brunn-Minkowski type inequalities to $L_p$ Brunn-Minkowski type
inequalities, called the $L_p$ transference principle, is refined in
this paper. As illustrations of the effectiveness and practicability
of this method, several new  $L_p$ Brunn-Minkowski type inequalities
concerning the mixed volume, moment of inertia, quermassintegral,
projection body and capacity are established.

\vskip 10pt{{\bf 2010 Mathematics Subject Classification:} 52A40.}

\vskip 10pt{{\bf Keywords:}  Brunn-Minkowski inequality; $L_p$
transference principle; quermassintegral; projection body; capacity}

\vskip 10pt
\end{minipage}
\end{center}

\vskip 25pt
\section{\bf Introduction}
\vskip 10pt

The classical Brunn-Minkowski inequality is a marvelous result of
combining two basic notions: vector addition and volume, which
reads as follows: If $K$ and $L$ are convex bodies (compact convex
sets with nonempty interiors) in Euclidean $n$-space $\mathbb{R}^n$
and $\alpha\in (0,1)$, then
\begin{equation}\label{classicalBMineq}
V_n\left((1-\alpha)K +\alpha L \right)^{\frac{1}{n}} \ge (1-\alpha)
V_n\left( K \right)^{\frac{1}{n}} + \alpha V_n\left( L \right)^{\frac{1}{n}},
\end{equation}
where
\[ (1-\alpha)K +\alpha L =\{(1-\alpha)x +\alpha y: x\in K, y\in L\}, \]
and $V_n$ denotes the $n$-dimensional volume. Equality holds in
(\ref{classicalBMineq}) if and only if $K$ and $L$ are homothetic
(i.e., they coincide up to a translation and a dilate). In brief,
the functional ${V_n}^{1/n}$ from $\mathcal{K}^n$, the class of
convex bodies in $\mathbb{R}^n$, to $[0,\infty)$ is concave.

As one of the cornerstones of  convex geometry (see Gardner
\cite{BookGardner}, Gruber \cite{BookGrubder}, and Schneider
\cite{BookSchneider}), the Brunn-Minkowski inequality is a powerful
tool for solving many extremum problems dealing with important
geometric quantities such as volume and surface area. It is also
related closely with many other fundamental inequalities, such as
the classical isoperimetric inequality, the
Pr$\rm{\acute{e}}$kopa-Leindler inequality, the Sobolev inequality,
and the Brascamp-Lieb inequality. See, e.g., Barthe \cite{Barthe}
and Bobkov and Ledoux \cite{BMineqBobkovLedoux}. For a more
comprehensive understanding, we refer to the excellent survey of
Gardner \cite{Gardner1}.

Nearly half a century ago, Firey \cite{Firey} (see also Gardner,
Hug, and Weil \cite[p. 2311]{Gardner4}, Lutwak, Yang, and Zhang
\cite{LpBMLYZ5}, and Schneider \cite[Section 9.1]{BookSchneider})
introduced the $L_p$ addition of convex bodies and established the
$L_p$ Brunn-Minkowski inequality for this new operation: If $K$ and
$L$ are convex bodies in $\mathbb{R}^n$ containing the origin in
their interiors, $p\in (1,\infty)$ and $\alpha\in (0,1)$, then
\begin{equation}\label{LpBMineq}
V_n\left( (1-\alpha)\cdot_p K +_p \alpha\cdot_p L
\right)^{\frac{p}{n}} \ge (1-\alpha) V_n\left( K
\right)^{\frac{p}{n}} + \alpha V_n\left( L\right)^{\frac{p}{n}},
\end{equation}
where $(1-\alpha)\cdot_p K=(1-\alpha)^{1/p}K$, $\alpha\cdot_p
L=\alpha^{1/p} L$, and
\[(1-\alpha)\cdot_p K +_p \alpha\cdot_p L= \left\{ {{{(1 - \beta )}^{\frac{{p - 1}}{p}}}{(1-\alpha)}^{\frac{1}{p}}x + {\beta ^{\frac{{p - 1}}{p}}}\alpha^{\frac{1}{p}}y:x \in K,y \in L,0 \le \beta  \le 1} \right\}.\]
Equality holds in (\ref{LpBMineq}) if and only if $K$ and $L$ are
dilates. Write $\mathcal{K}^n_o$  for the class of convex bodies
with the origin in their interiors. In brief, the functional
${V_{n}}^{{p}/{n}}$ from  $\mathcal{K}^n_o$  to $[0,\infty)$ is concave.

Further developments of the $L_p$  Brunn-Minkowski theory were greatly
impelled by Lutwak \cite{LpBMLutwak1, LpBMLutwak2}, who nearly set
up a broad framework for the theory. A series
of fundamental notions, geometric objects, and central results in
the classical Brunn-Minkowski theory  evolved into their $L_p$
analogs. See, e.g., \cite{LpBMLYZ1, LpBMLYZ2, LpBMLYZ3, LpBMLYZ4,
LpBMLYZ5, LpBMPaourisWerner, LpBMRyaboginZvavitch, LpBMSchuttWerner,
LpBMWernerYe, LpBMYe}.

In retrospect, we observe that to establish new Brunn-Minkowski type
inequalities, we  encounter essentially the following  two general
situations.

First,  if a  functional $F: \mathcal{K}^n\to [0,\infty)$ is
positively homogeneous of order $j$, $j\neq 0$, is it the case that
functional $F^{{1}/{j}}$ is concave?  Precisely, for
$K,L\in\mathcal{K}^n$ and $\alpha\in (0,1)$, is it the case that
\[ F \left( (1-\alpha)K+\alpha L \right)^{\frac{1}{j}}\ge (1-\alpha) F
\left(K \right)^{\frac{1}{j}} +\alpha F \left(L \right)^{\frac{1}{j}}? \]
Incidentally, we can list some beautiful
confirmed examples within the classical Brunn-Minkowski theory, such
as the classical mixed volumes (see Schneider \cite[p.
406]{BookSchneider}), Hadwiger's harmonic quermassintegrals (see,
e.g., Hadwiger \cite[p. 268]{BMineqHadwiger} and Schneider \cite[p.
514]{BookSchneider}), and Lutwak's affine quermassintegrals (see,
e.g., Gardner \cite[p. 393]{Gardner1},
\cite[p. 361]{GadnerAffineQuermass} and  Schneider \cite[p.
515]{BookSchneider}). Also, some instances were discovered in other
disciplines. For example, Brascamp and Lieb
\cite{BMineqBrascampLieb} established a Brunn-Minkowski type
inequality for the first eigenvalue of the Laplace operator. Borell
\cite{BMineqBorell} proved a Brunn-Minkowski type inequality for the
Newtonian capacity. See also Caffarelli, Jerison, and Lieb
\cite{BMineqCaffarelliJerison}, Colesanti and Salani
\cite{BMineqColesanti1}, Gardner and Hartenstine \cite{Gardner3},
and the references within.

Second, assume that a functional $F: \mathcal{K}^n\to [0,\infty)$ is
positively homogeneous and concave, for $K,L\in\mathcal{K}^n_o$ and
$\alpha\in (0,1)$. Is it the case that
\[F\left( (1-\alpha)\cdot_p K +_p \alpha\cdot_p L \right)^{p} \ge (1-\alpha) F\left( K \right)^{p} + \alpha F\left( L\right)^{p}? \]
Obviously, if $F={V_{n}}^{1/n}$,  for $p>1$, the $L_p$ Brunn-Minkowski
inequality  is a confirmed case.

The prime motivation of this paper is to formulate and prove the
following \emph{$L_p$ transference principle}.

\begin{theorem}
Suppose that $F:\mathcal{K}^n\to [0,\infty)$ is positively
homogeneous,  increasing and concave, and $p\in (1,\infty)$.  If $K,
L\in\mathcal{K}^n_o$, then
\[ F((1-\alpha)\cdot_p K+_p \alpha\cdot_p L)^p\ge (1-\alpha)F(K)^p+\alpha F(L)^p, \quad  for\; all\;  \alpha \in (0,1). \]
Furthermore, if $F:\mathcal{K}^n_o\to (0,\infty)$ is strictly
increasing,  equality holds if and only if $K$ and $L$ are dilates.
\end{theorem}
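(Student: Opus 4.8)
The plan is to trade the $L_p$ combination for ordinary Minkowski combinations. Set $q=p/(p-1)$ and, for $0\le\beta\le1$, write
\[ M_\beta=(1-\beta)^{1/q}(1-\alpha)^{1/p}K+\beta^{1/q}\alpha^{1/p}L . \]
By the definition recalled above, $M_\beta\subseteq(1-\alpha)\cdot_p K+_p\alpha\cdot_p L$ for every $\beta$; moreover, since the support function of a union of sets is the pointwise supremum of their support functions, the support function of $(1-\alpha)\cdot_p K+_p\alpha\cdot_p L$ equals $\sup_{0\le\beta\le1}h_{M_\beta}$. The only elementary ingredient I will need is that, for $a,b\ge0$,
\[ \sup_{0\le\beta\le1}\bigl[(1-\beta)^{1/q}a+\beta^{1/q}b\bigr]=\bigl(a^p+b^p\bigr)^{1/p}, \]
with the supremum attained, when $(a,b)\ne(0,0)$, only at $\beta^\ast=b^p/(a^p+b^p)$; this is nothing but the equality case of H\"older's inequality. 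Applied with $a=(1-\alpha)^{1/p}h_K(u)$ and $b=\alpha^{1/p}h_L(u)$ it identifies the support function of $(1-\alpha)\cdot_p K+_p\alpha\cdot_p L$ at $u$ as $\bigl((1-\alpha)h_K(u)^p+\alpha h_L(u)^p\bigr)^{1/p}$, which is the defining relation of the $L_p$ combination.

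Granting this, the inequality will follow in three moves. First, since $F$ is increasing and $M_\beta\subseteq(1-\alpha)\cdot_p K+_p\alpha\cdot_p L$, we get $F\bigl((1-\alpha)\cdot_p K+_p\alpha\cdot_p L\bigr)\ge F(M_\beta)$ for every $\beta$. Second, a concave, positively homogeneous (of degree one) functional satisfies $F(aK+bL)\ge aF(K)+bF(L)$ for all $a,b>0$, so
\[ F(M_\beta)\ge(1-\beta)^{1/q}(1-\alpha)^{1/p}F(K)+\beta^{1/q}\alpha^{1/p}F(L) . \]
Third, take the supremum over $\beta\in[0,1]$ and apply the elementary ingredient with $a=(1-\alpha)^{1/p}F(K)$, $b=\alpha^{1/p}F(L)$; this yields $F\bigl((1-\alpha)\cdot_p K+_p\alpha\cdot_p L\bigr)\ge\bigl((1-\alpha)F(K)^p+\alpha F(L)^p\bigr)^{1/p}$, and raising to the $p$-th power finishes the inequality.

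For the equality statement, sufficiency is a short computation: if $L=\lambda K$ then the defining relation gives $(1-\alpha)\cdot_p K+_p\alpha\cdot_p L=\bigl((1-\alpha)+\alpha\lambda^p\bigr)^{1/p}K$, and homogeneity of $F$ makes both sides of the asserted identity equal to $\bigl((1-\alpha)+\alpha\lambda^p\bigr)F(K)^p$. For necessity, I would assume $F$ strictly increasing and that equality holds, and re-run the three moves at the distinguished value $\beta^\ast$ associated to $a=(1-\alpha)^{1/p}F(K)$, $b=\alpha^{1/p}F(L)$ (here $\beta^\ast\in(0,1)$ because $F(K),F(L)>0$). Equality in the theorem forces equality all along that chain, so in particular $F(M_{\beta^\ast})=F\bigl((1-\alpha)\cdot_p K+_p\alpha\cdot_p L\bigr)$; since $M_{\beta^\ast}\subseteq(1-\alpha)\cdot_p K+_p\alpha\cdot_p L$ and $F$ is strictly increasing, the two convex bodies coincide, hence have equal support functions:
\[ (1-\alpha)h_K(u)^p+\alpha h_L(u)^p=\bigl((1-\beta^\ast)^{1/q}(1-\alpha)^{1/p}h_K(u)+(\beta^\ast)^{1/q}\alpha^{1/p}h_L(u)\bigr)^p \]
for all $u$. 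The equality case of H\"older's inequality turns this pointwise identity into $(1-\alpha)h_K(u)^p/(1-\beta^\ast)=\alpha h_L(u)^p/\beta^\ast$; and since $(1-\beta^\ast)/\beta^\ast=(1-\alpha)F(K)^p/(\alpha F(L)^p)$ by the choice of $\beta^\ast$, this simplifies to $F(L)h_K(u)=F(K)h_L(u)$ for every $u$, i.e.\ $K$ and $L$ are dilates.

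The step I expect to be the main obstacle is the equality analysis. The crucial point is that equality in the theorem forces equality in the monotonicity and superadditivity steps not for an arbitrary $\beta$ but for the specific $\beta^\ast$ read off from $F(K)$ and $F(L)$, and that at $\beta^\ast$ the monotonicity step by itself already pins $K$ and $L$ down to a dilation -- a fortunate circumstance, since the equality case of the concavity of $F$ is opaque in this generality and must not be invoked. One also has to be careful that ``strictly increasing'' is used in the precise form $A\subsetneq B\Rightarrow F(A)<F(B)$, which is exactly what upgrades $F(M_{\beta^\ast})=F\bigl((1-\alpha)\cdot_p K+_p\alpha\cdot_p L\bigr)$ to equality of the bodies themselves.
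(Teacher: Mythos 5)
Your proof is correct. The skeleton is the same as the paper's --- monotonicity of $F$ reduces the $L_p$ combination to a single well-chosen Minkowski combination, after which concavity plus homogeneity (superadditivity) finishes the job, and the equality case is extracted from strict monotonicity alone without ever touching the equality case of the concavity of $F$ --- but your route to that Minkowski combination is genuinely different. The paper normalizes to $K/F(K)$ and $L/F(L)$, rewrites the $L_p$ combination with the transformed weight $\alpha'=\alpha F(L)^p/\bigl((1-\alpha)F(K)^p+\alpha F(L)^p\bigr)$, and invokes its Lemma 2.2 (the pointwise power-mean containment $(1-\alpha')\cdot_p A+_p\alpha'\cdot_p B\supseteq(1-\alpha')A+\alpha'B$, with equality iff $A=B$); you instead parametrize the Firey family $M_\beta$ inside the $L_p$ sum and optimize over $\beta$ via H\"older. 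The two are dual presentations of one idea: your $\beta^\ast$ equals the paper's $\alpha'$, and $M_{\beta^\ast}$ is exactly the paper's normalized Minkowski combination rescaled by $\bigl((1-\alpha)F(K)^p+\alpha F(L)^p\bigr)^{1/p}$. What your version buys: the degenerate case $F(K)F(L)=0$ needs no separate argument (the sup identity $\sup_\beta[(1-\beta)^{1/q}a+\beta^{1/q}b]=(a^p+b^p)^{1/p}$ is valid for $a,b\ge0$), and the equality analysis comes straight from the H\"older equality case at $\beta^\ast$ rather than from a containment lemma. What the paper's version buys: Lemma 2.2 is stated once and reused elsewhere (e.g.\ in Theorem 3.5), and the normalization makes the final chain terminate at the clean constant $1$.
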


See Section 2 for the definitions of positive homogeneity and
increasing property  of a functional $F$.

In Section 3, we prove Theorem 1.1 and then  dwell on the equality
condition. It is observed that  equality holds in the classical
Brunn-Minkowski inequality if and only if the  convex bodies are
\emph{homothetic}, while   equality holds in the $L_p$
Brunn-Minkowski inequality  if and only if the  convex bodies are
\emph{dilates}. We reveal the reason and characterize this
phenomenon.  See Theorem 3.4 and Theorem 3.5.

In Section 4, as illustrations of the effectiveness and
practicability of our $L_p$ transference principle,  several new
$L_p$ Brunn-Minkowski type inequalities are established, which are
concerned with the classical mixed volume, moment of inertia, affine
quermassintegral, harmonic quermassintegral,  projection body and
capacity. For example, by using the $L_p$ transference principle, we
obtain the  $L_p$ capacitary Brunn-Minkowski inequality directly.

\begin{theorem}
Suppose that $K,L\in\mathcal{K}^n_o$, $1\le q<n$, and $1<p<\infty$.
Then
\[{\rm{Ca}}{{\rm{p}}_q}{(K{ + _p}L)^{\frac{p}{{n - q}}}} \ge {\rm{Ca}}{{\rm{p}}_q}{(K)^{\frac{p}{{n - q}}}} + {\rm{Ca}}{{\rm{p}}_q}{(L)^{\frac{p}{{n - q}}}},\]
with equality if and only if $K$ and $L$ are dilates.
\end{theorem}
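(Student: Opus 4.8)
The plan is to derive Theorem 1.2 directly from the $L_p$ transference principle (Theorem 1.1), applied to the functional
\[ F(M)=\mathrm{Cap}_q(M)^{\frac{1}{n-q}},\qquad M\in\mathcal{K}^n. \]
Once one knows that this $F$ meets the three hypotheses of Theorem 1.1 — it is positively homogeneous, (strictly) increasing, and concave — the theorem yields the $L_p$ inequality for the weighted combination $(1-\alpha)\cdot_p K+_p\alpha\cdot_p L$, and a short homogenization turns this into the assertion about the plain $L_p$ sum $K+_p L$. So the real content is the verification of those three properties, together with the bookkeeping of the operations $\cdot_p$ and $+_p$.

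First I would record the structural properties of the $q$-capacity for $1\le q<n$. From the definition $\mathrm{Cap}_q(M)=\inf\{\int_{\mathbb{R}^n}|\nabla u|^q\,dx:\ u\in C_c^\infty(\mathbb{R}^n),\ u\ge 1\ \text{on}\ M\}$, the substitution $x\mapsto\lambda x$ ($\lambda>0$) gives $\mathrm{Cap}_q(\lambda M)=\lambda^{n-q}\mathrm{Cap}_q(M)$, so $F$ is positively homogeneous of degree $1$. Monotonicity is immediate: if $K\subseteq L$, then every $u$ admissible for $L$ is admissible for $K$, whence $\mathrm{Cap}_q(K)\le\mathrm{Cap}_q(L)$, with strict inequality when $K\subsetneq L$; thus $F$ is strictly increasing and strictly positive on $\mathcal{K}^n_o$. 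Concavity of $F$ is exactly the classical $q$-capacitary Brunn--Minkowski inequality
\[ \mathrm{Cap}_q\bigl((1-\alpha)K+\alpha L\bigr)^{\frac{1}{n-q}}\ \ge\ (1-\alpha)\,\mathrm{Cap}_q(K)^{\frac{1}{n-q}}+\alpha\,\mathrm{Cap}_q(L)^{\frac{1}{n-q}}, \]
established by Borell in the Newtonian case $q=2$ and, in general, by Colesanti--Salani; see also Caffarelli--Jerison--Lieb and Gardner--Hartenstine \cite{BMineqBorell,BMineqColesanti1,BMineqCaffarelliJerison,Gardner3}. Hence all hypotheses of Theorem 1.1 are satisfied.

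The next step is to apply Theorem 1.1 to $F$ with $\alpha=\tfrac12$: for $K,L\in\mathcal{K}^n_o$,
\[ F\Bigl(\tfrac12\cdot_p K+_p\tfrac12\cdot_p L\Bigr)^{p}\ \ge\ \tfrac12\,F(K)^{p}+\tfrac12\,F(L)^{p}. \]
At the level of support functions, $h^p_{\frac12\cdot_p K+_p\frac12\cdot_p L}=\tfrac12 h_K^p+\tfrac12 h_L^p=\tfrac12\,h^p_{K+_p L}$, so $\tfrac12\cdot_p K+_p\tfrac12\cdot_p L=(\tfrac12)^{1/p}(K+_p L)$; by positive homogeneity of degree $1$, $F\bigl(\tfrac12\cdot_p K+_p\tfrac12\cdot_p L\bigr)^p=\tfrac12\,F(K+_p L)^p$. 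Cancelling $\tfrac12$ gives $F(K+_p L)^p\ge F(K)^p+F(L)^p$, which, on substituting $F=\mathrm{Cap}_q^{1/(n-q)}$, is exactly
\[ \mathrm{Cap}_q(K+_p L)^{\frac{p}{n-q}}\ \ge\ \mathrm{Cap}_q(K)^{\frac{p}{n-q}}+\mathrm{Cap}_q(L)^{\frac{p}{n-q}}. \]
For the equality case, since $F$ is strictly increasing on $\mathcal{K}^n_o$, the equality clause of Theorem 1.1 forces $K$ and $L$ to be dilates; conversely, if $L=\lambda K$ then $K+_p L=(1+\lambda^p)^{1/p}K$, and homogeneity of $\mathrm{Cap}_q$ makes both sides above equal to $(1+\lambda^p)\,\mathrm{Cap}_q(K)^{p/(n-q)}$, so equality indeed holds.

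The transference step is thus mere substitution; what will need care are the two classical inputs over the whole range $1\le q<n$ — the $q$-capacitary Brunn--Minkowski inequality (equivalently, concavity of $F$) and the strict monotonicity of $\mathrm{Cap}_q$ on convex bodies, the latter entering only in the equality analysis — as well as the bookkeeping of $\cdot_p$ and $+_p$ in rescaling $\tfrac12\cdot_p K+_p\tfrac12\cdot_p L$ to $K+_p L$; this last point is cleanest through support functions, where $h^p_{\lambda\cdot_p M}=\lambda\,h_M^p$ and $h^p_{M+_p N}=h_M^p+h_N^p$.
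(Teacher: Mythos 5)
Your derivation of the inequality itself is correct and coincides with the paper's: both apply the transference principle to $F=\mathrm{Cap}_q(\cdot)^{1/(n-q)}$, whose positive homogeneity (of degree one, via $\mathrm{Cap}_q(\lambda M)=\lambda^{n-q}\mathrm{Cap}_q(M)$), monotonicity, and concavity (the latter being Proposition 4.8, i.e.\ Borell and Colesanti--Salani) are exactly the required hypotheses; your $\alpha=\tfrac12$ homogenization is a harmless variant of the paper's Corollary 3.2.

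The gap is in the equality case. You invoke the ``strictly increasing'' clause of Theorem 1.1, and to do so you assert that $K\subsetneq L$ implies $\mathrm{Cap}_q(K)<\mathrm{Cap}_q(L)$, justifying this only by the remark that every admissible function for $L$ is admissible for $K$. That argument yields $\mathrm{Cap}_q(K)\le\mathrm{Cap}_q(L)$ and nothing more: a strict inclusion of competitor classes does not force a strict drop of an infimum. Strict monotonicity of $q$-capacity on convex bodies is true, but it is not immediate from the definition; the cleanest route is itself through the equality case of the capacitary Brunn--Minkowski inequality (if $K\subseteq L$ and $\mathrm{Cap}_q(K)=\mathrm{Cap}_q(L)$, apply the inequality to $\tfrac12K+\tfrac12L\subseteq L$ to force $K$ and $L$ homothetic, hence equal), which is exactly the mechanism the paper packages as Theorem 3.3. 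Indeed, the paper deliberately avoids claiming strict monotonicity of $\mathrm{Cap}_q$: it notes instead that $F$ is translation invariant and that equality in Proposition 4.8 holds precisely for homothetic bodies, and then Theorem 3.3 converts that classical equality characterization into the ``dilates'' characterization for the $L_p$ sum. So either supply the strict-monotonicity argument sketched above (at which point you are essentially reproving Theorem 3.2), or follow the paper and route the equality analysis through translation invariance and Theorem 3.3; as written, the step ``with strict inequality when $K\subsetneq L$'' is unsupported.
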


\vskip 25pt
\section{\bf Preliminaries}
\vskip 10pt

As usual, $S^{n-1}$ denotes the unit sphere in $n$-dimensional
Euclidean space $\mathbb{R}^n$, and $B^n$ denotes the unit ball in
$\mathbb{R}^n$. If $x,y\in\mathbb{R}^n$, then $x\cdot y$ denotes the
inner product of $x$ and $y$. If $u\in S^{n-1}$, then $u^\bot$
denotes the $(n-1)$-dimensional subspace orthogonal to $u$. Write
$V_j$ for $j$-dimensional volume, where $j=1,\ldots,n$. As usual,
$\omega_j$ denotes the volume of $j$-dimensional unit Euclidean ball.

Write ${\rm G}_{n,j}$ for the Grassmann manifold of all
$j$-dimensional linear subspaces of $\mathbb{R}^n$, which is
equipped with Haar probability measure $\mu_{j}$. For
$K\in\mathcal{K}^n$, let $K|\xi$ be the orthogonal projection of $K$
onto $\xi\in {\rm G}_{n,j}$.

Each convex body $K$ in $\mathbb{R}^n$ is uniquely determined by its
support function $h_K: \mathbb{R}^n\to\mathbb{R}$, which is defined
by
\[ h_K(x)=\max\left\{x\cdot y: y\in K \right\},\]
for $x\in\mathbb{R}^n$. For $\alpha>0$,  the body $\alpha K=\{\alpha
x: x\in K\}$ is called a dilate of $K$.

For $K, L\in \mathcal{K}^n$, their Minkowski sum is the convex body
\[K+L=\{x+y: x\in K, y\in L\}.\]

Let $1<p < \infty$. The $L_p$ sum of $K, L\in \mathcal{K}^n_o$  is
the convex body $K +_p L$, defined by
\[ h_{K +_p L}(u)^p =h_{K}(u)^p +h_{L}(u)^p, \]
for $u\in S^{n-1}$. If  $p=\infty$, the convex body $K +_\infty L$
is defined by
\[h_{K +_\infty L}(u)=\max\{ h_{K}(u), h_{L}(u)\}, \]
for $u\in S^{n-1}$.

For $\alpha>0$ and $K\in\mathcal{K}^n_o$, the $L_p$ scalar
multiplication $\alpha\cdot_p K$ is the convex body
$\alpha^{\frac{1}{p}} K$.

Given a functional $F:\mathcal{K}^n\to [0,\infty)$, we say that $F$
is

\noindent (1) \emph{positively homogeneous}, provided
\[F(\alpha K)=\alpha F(K),\]
for $K\in\mathcal{K}^n$ and $\alpha>0$.

\noindent (2) \emph{increasing}, provided
\[ F(K)\le F(L),\]
for $K,L\in\mathcal{K}^n$ with $K\subseteq L$. Moreover, if the
strict inclusion $K\subsetneq L$ implies  $F(K)< F(L)$, then $F$ is
\emph{strictly increasing}.

\noindent (3) \emph{$p$-concave}, provided
\[ F((1-\alpha)\cdot_p K+_p\alpha\cdot_p L)\ge ((1-\alpha)F(K)^p+\alpha F(L)^p)^{\frac{1}{p}},\]
for $K,L\in\mathcal{K}^n$ and $\alpha\in (0,1)$. As usual,
$1$-concave is called concave for brevity.

Associated with a functional $F:\mathcal{K}^n_o\to [0,\infty)$,
$p\in[1,\infty)$ and $K,L\in\mathcal{K}^n_o$, it is convenient to
introduce a function $F_{p; K, L}: [0,1]\to [0,\infty)$ defined by
\[F_{p;K,L}(\alpha)=F\left( (1-\alpha)\cdot_p K +_p \alpha\cdot_p L\right )^p, \]
for $\alpha \in [0,1]$.

The next lemma shows  that the $p$-concavity of $F$ and the
concavity of $F_{p;K,L}$ are actually equivalent.

\begin{lemma}\label{lemma2.1}
Suppose that $K,L\in\mathcal{K}^n_o$ and $1\le p<\infty$. Then, $F$
is $p$-concave, if and only if $F_{p;K,L}$ is concave.
\end{lemma}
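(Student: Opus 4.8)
The plan is to establish both directions by exploiting the one-parameter structure of the $L_p$ combination. The crucial observation is that the operation $(1-\alpha)\cdot_p K +_p \alpha\cdot_p L$ behaves well under "reparametrization": if we set $M_\alpha = (1-\alpha)\cdot_p K +_p \alpha\cdot_p L$, then for any $\beta \in (0,1)$ and any two parameters $\alpha_0, \alpha_1 \in [0,1]$, the body $(1-\beta)\cdot_p M_{\alpha_0} +_p \beta\cdot_p M_{\alpha_1}$ is again of the form $M_{\alpha}$ for a suitable $\alpha$. Concretely, working at the level of support functions, $h_{M_\alpha}(u)^p = (1-\alpha)h_K(u)^p + \alpha h_L(u)^p$, so $h_{M_\alpha}^p$ is the affine interpolation (in $\alpha$) between $h_K^p$ and $h_L^p$. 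Hence
\[
h_{(1-\beta)\cdot_p M_{\alpha_0}+_p \beta\cdot_p M_{\alpha_1}}(u)^p = (1-\beta)h_{M_{\alpha_0}}(u)^p + \beta h_{M_{\alpha_1}}(u)^p = h_{M_{\alpha_\beta}}(u)^p,
\]
where $\alpha_\beta = (1-\beta)\alpha_0 + \beta\alpha_1$. This identity is the engine of the whole argument.

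First I would prove the forward direction ($F$ $p$-concave $\Rightarrow$ $F_{p;K,L}$ concave). Fix $\alpha_0,\alpha_1\in[0,1]$ and $\beta\in(0,1)$, and put $\alpha_\beta = (1-\beta)\alpha_0+\beta\alpha_1$. Apply the definition of $p$-concavity to the pair of bodies $M_{\alpha_0}, M_{\alpha_1} \in \mathcal{K}^n_o$ with weight $\beta$:
\[
F\bigl((1-\beta)\cdot_p M_{\alpha_0} +_p \beta\cdot_p M_{\alpha_1}\bigr)^p \ge (1-\beta)F(M_{\alpha_0})^p + \beta F(M_{\alpha_1})^p.
\]
By the support-function identity above, the left side equals $F(M_{\alpha_\beta})^p = F_{p;K,L}(\alpha_\beta)$, while the right side is $(1-\beta)F_{p;K,L}(\alpha_0) + \beta F_{p;K,L}(\alpha_1)$. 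This is exactly concavity of $F_{p;K,L}$.

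For the reverse direction ($F_{p;K,L}$ concave for all $K,L$ $\Rightarrow$ $F$ $p$-concave), I would specialize: given arbitrary $K,L\in\mathcal{K}^n_o$ and $\alpha\in(0,1)$, concavity of $F_{p;K,L}$ applied with the interpolation points $0$ and $1$ and weight $\alpha$ gives
\[
F_{p;K,L}(\alpha) \ge (1-\alpha)F_{p;K,L}(0) + \alpha F_{p;K,L}(1).
\]
Since $M_0 = 1\cdot_p K = K$ and $M_1 = 1\cdot_p L = L$ (here $1^{1/p}=1$), this reads $F((1-\alpha)\cdot_p K +_p \alpha\cdot_p L)^p \ge (1-\alpha)F(K)^p + \alpha F(L)^p$, and taking $p$-th roots yields the defining inequality for $p$-concavity of $F$.

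I do not anticipate a serious obstacle; the only point requiring mild care is the bookkeeping at the boundary values of $\alpha$ (checking $M_0=K$, $M_1=L$, and that all intermediate bodies $M_\alpha$ genuinely lie in $\mathcal{K}^n_o$ so that $F$ is defined on them), together with verifying the support-function reparametrization identity, which is immediate from $h_{M_\alpha}(u)^p=(1-\alpha)h_K(u)^p+\alpha h_L(u)^p$. If one wants the equivalence for a fixed pair $(K,L)$ rather than "for all pairs," the reverse direction needs the slightly stronger hypothesis that $F_{p;M_{\alpha_0},M_{\alpha_1}}$ is concave for the derived bodies as well, but the natural reading of the lemma quantifies over all bodies, so the argument as sketched suffices.
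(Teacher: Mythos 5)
Your proposal is correct and follows essentially the same route as the paper: the reparametrization identity $h_{M_{\alpha_\beta}}^p=(1-\beta)h_{M_{\alpha_0}}^p+\beta h_{M_{\alpha_1}}^p$ is exactly the computation the paper uses to show $Q=(1-\lambda)\cdot_p K_\alpha+_p\lambda\cdot_p K_\beta$, and the converse via the interpolation points $0$ and $1$ is identical. Your closing remark about the quantification over pairs $(K,L)$ matches the paper's implicit reading as well.
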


\begin{proof}
Let $\lambda,\alpha,\beta\in [0,1]$. Assume that $F_{p;K,L}$ is
concave. Then
\[{F_{p;K,L}}((1 - \lambda )\alpha  + \lambda \beta ) \ge (1 - \lambda ){F_{p;K,L}}(\alpha ) + \lambda {F_{p;K,L}}(\beta ).\]
Taking $\alpha=0$ and $\beta=1$, we obtain
\[{F_{p;K,L}}(\lambda ) \ge (1 - \lambda ){F_{p;K,L}}(0) + \lambda {F_{p;K,L}}(1),\]
i.e.,
\[F{((1 - \lambda ){ \cdot _p}K{ + _p}\lambda { \cdot _p}L)^p} \ge (1 - \lambda )F{(K)^p} + \lambda F{(L)^p},\]
which shows that $F$ is $p$-concave.

Conversely, assume that $F$ is $p$-concave. Let
\begin{align*}
&{K_\alpha } = (1 - \alpha ){ \cdot _p}K{ + _p}\alpha { \cdot _p}L, \\
&{K_\beta } = (1 - \beta ){ \cdot _p}K{ + _p}\beta { \cdot _p}L, \\
&Q = \left( {1 - ((1 - \lambda )\alpha  + \lambda \beta )} \right){
\cdot _p}K{ + _p}((1 - \lambda )\alpha  + \lambda \beta ){ \cdot_p}L.
\end{align*}
Then
\begin{align*}
{h_Q}^p &= \left( {1 - ((1 - \lambda )\alpha  + \lambda \beta )} \right){h_K}^p + ((1 - \lambda )\alpha  + \lambda \beta ){h_L}^p \\
&= \left( {(1 - \lambda )(1 - \alpha ) + \lambda (1 - \beta )} \right){h_K}^p + ((1 - \lambda )\alpha  + \lambda \beta ){h_L}^p \\
&= (1 - \lambda )\left( {(1 - \alpha ){h_K}^p + \alpha {h_L}^p} \right) + \lambda \left( {(1 - \beta ){h_K}^p + \beta {h_L}^p} \right) \\
&= (1 - \lambda ){h_{{K_\alpha }}}^p + \lambda {h_{{K_\beta }}}^p,
\end{align*}
which implies that $Q=(1 - \lambda )\cdot_p K_\alpha +_p
\lambda\cdot_p K_\beta$.

Thus, from the  $p$-concavity of $F$, it follows that
\begin{align*}
{F_{p;K,L}}\left( {(1 - \lambda )\alpha  + \lambda \beta }\right) &= F{(Q)^p} \\
&= F{((1 - \lambda ){ \cdot _p}{K_\alpha }{ + _p}\lambda { \cdot _p}{K_\beta })^p} \\
&\ge (1 - \lambda )F{({K_\alpha })^p} + \lambda F{({K_\beta })^p} \\
&= (1 - \lambda ){F_{p;K,L}}(\alpha) + \lambda {F_{p;K,L}}(\beta ),
\end{align*}
which shows that $F_{p;K,L}$ is concave.
\end{proof}

The following lemma will be used in Section 3.
\begin{lemma}\label{lemma2.2}
Suppose $K,L\in\mathcal{K}^n_o$, $1<p<\infty$, and $0< \alpha< 1$.
Then
\[ (1-\alpha)\cdot_p K+_p \alpha\cdot_p L\supseteq (1-\alpha) K + \alpha L, \]
with equality if and only if $K=L$.
\end{lemma}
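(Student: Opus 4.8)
The plan is to prove the inclusion by working with support functions, since $L_p$ Minkowski sums and dilates are defined via support functions while the ordinary Minkowski combination can be bounded using subadditivity of support functions. First I would recall that for convex bodies $M, N \in \mathcal{K}^n_o$ one has $M \subseteq N$ if and only if $h_M(u) \le h_N(u)$ for all $u \in S^{n-1}$, so the stated inclusion is equivalent to
\[ h_{(1-\alpha)\cdot_p K +_p \alpha\cdot_p L}(u) \ge h_{(1-\alpha)K + \alpha L}(u) \quad \text{for all } u \in S^{n-1}. \]
By the definitions in Section 2, the left side equals $\bigl((1-\alpha) h_K(u)^p + \alpha h_L(u)^p\bigr)^{1/p}$, while the right side equals $(1-\alpha) h_K(u) + \alpha h_L(u)$ (using that support functions are additive under Minkowski sum and positively homogeneous under dilation, and that $h_K, h_L \ge 0$ on $S^{n-1}$ since $K, L$ contain the origin). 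So the whole inclusion reduces to the pointwise scalar inequality
\[ \bigl((1-\alpha) a^p + \alpha b^p\bigr)^{1/p} \ge (1-\alpha) a + \alpha b, \qquad a = h_K(u) \ge 0,\ b = h_L(u) \ge 0. \]

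This scalar inequality is exactly the statement that the function $t \mapsto t^p$ is convex on $[0,\infty)$ for $p > 1$ — equivalently, it follows from the power mean inequality comparing the $p$-mean with the arithmetic mean, or from Jensen's inequality applied to the convex function $t\mapsto t^p$ with weights $1-\alpha, \alpha$. I would state it cleanly: for $a,b \ge 0$, $p>1$, and $\alpha \in (0,1)$, we have $(1-\alpha)a + \alpha b = \bigl(((1-\alpha)a + \alpha b)^p\bigr)^{1/p} \le \bigl((1-\alpha)a^p + \alpha b^p\bigr)^{1/p}$, by convexity of $s \mapsto s^p$. Raising the established pointwise inequality $h_{(1-\alpha)\cdot_p K+_p\alpha\cdot_p L} \ge h_{(1-\alpha)K+\alpha L}$ over all $u$ then gives the inclusion.

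For the equality characterization: equality in the inclusion of convex bodies means $h_{(1-\alpha)\cdot_p K+_p\alpha\cdot_p L}(u) = h_{(1-\alpha)K+\alpha L}(u)$ for \emph{every} $u \in S^{n-1}$, which by the above forces equality in the scalar inequality at every $u$, i.e.\ $\bigl((1-\alpha)h_K(u)^p + \alpha h_L(u)^p\bigr)^{1/p} = (1-\alpha)h_K(u) + \alpha h_L(u)$ for all $u$. Since $s \mapsto s^p$ is \emph{strictly} convex for $p>1$, equality here holds if and only if $h_K(u) = h_L(u)$ — and this at every $u \in S^{n-1}$ means $h_K \equiv h_L$, hence $K = L$. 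Conversely, if $K = L$ then both sides of the inclusion equal $K$, so equality holds trivially. The main (and essentially only) obstacle is bookkeeping: making sure to invoke that $h_K, h_L$ are nonnegative on $S^{n-1}$ (which needs $0 \in \mathrm{int}\,K$, $0 \in \mathrm{int}\,L$, part of the hypothesis $K,L \in \mathcal{K}^n_o$) so that the scalar convexity argument applies with nonnegative arguments, and being careful that equality of two convex bodies is equivalent to equality of their support functions everywhere — there is no genuine difficulty beyond that.
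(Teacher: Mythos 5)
Your proposal is correct and follows essentially the same route as the paper: both reduce the inclusion to the pointwise support-function inequality $\bigl((1-\alpha)h_K(u)^p+\alpha h_L(u)^p\bigr)^{1/p}\ge(1-\alpha)h_K(u)+\alpha h_L(u)$ via convexity of $t\mapsto t^p$, and both use strict convexity to force $h_K\equiv h_L$ in the equality case. Your version merely spells out more of the bookkeeping (nonnegativity of the support functions, the equivalence between body inclusion and support-function comparison) that the paper leaves implicit.
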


\begin{proof}
From the definition of $(1-\alpha)\cdot_p K+_p \alpha\cdot_p L$ and
the strict convexity of  $f(t)=t^p$ in $t\in (0,\infty)$, it follows
that for $u\in S^{n-1}$,
\begin{align*}
{h_{(1 - \alpha){ \cdot _p}K{ + _p}\alpha{ \cdot _p}L}}(u) &= {\left( {(1 - \alpha){h_K}{{(u)}^p} + \alpha{h_L}{{(u)}^p}} \right)^{\frac{1}{p}}} \\
&\ge (1 - \alpha){h_K}(u) + \alpha{h_L}(u) \\
&= {h_{(1 - \alpha)K + \alpha L}}(u).
\end{align*}
Equality holds in the second line if and only if
$h_K(u)=h_L(u)$, for all $u\in S^{n-1}$, and therefore if and only
if $K=L$.
\end{proof}

The next lemma will be needed in Section 3 and  Section 4.
\begin{lemma}\label{lemma2.3}
Suppose $K,L\in\mathcal{K}^n$ and $j\in\{1,\ldots,n-1\}$. If
$K\subsetneq L$, then there exists a $\mu_j$-measurable subset
$G\subseteq {\rm G}_{n,j}$ such that $\mu_j(G)>0$ and
\[ V_j(K|\xi)< V_j(L|\xi),\quad for\;all\; \xi\in G. \]
\end{lemma}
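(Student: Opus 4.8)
The plan is to reduce the statement to two facts. Since $K\subseteq L$ we automatically have $K|\xi\subseteq L|\xi$ for every $\xi\in {\rm G}_{n,j}$, so it suffices to show: (i) for a fixed $\xi$, strict inclusion $K|\xi\subsetneq L|\xi$ forces $V_j(K|\xi)<V_j(L|\xi)$ (strict monotonicity of $j$-volume on the projections); and (ii) the set $G=\{\xi\in{\rm G}_{n,j}:K|\xi\subsetneq L|\xi\}$ has $\mu_j(G)>0$. Granting (i) and (ii), this $G$ is the desired set.

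For (i): because $L$ has nonempty interior it contains a ball, so $L|\xi$ contains a $j$-dimensional ball and $V_j(L|\xi)>0$. If $V_j(K|\xi)=0$ we are done; otherwise $A:=K|\xi$ is a $j$-dimensional convex body in $\xi$ with $A\subsetneq B:=L|\xi$. Pick $a$ in the interior of $A$ (in $\xi$) and $y\in B\setminus A$. The convex hull $C=\mathrm{conv}(A\cup\{y\})$ lies in $B$, and near the apex $y$ it contains a small $j$-dimensional cone that is disjoint from $A$ (using $\mathrm{dist}(y,A)>0$ together with a small ball of $A$ around $a$); hence $V_j(B)\ge V_j(C)>V_j(A)$.

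For (ii): choose $x_0\in L\setminus K$. Since $K$ is compact and convex and $x_0\notin K$, there is $v\in S^{n-1}$ with $h_K(v)<x_0\cdot v$. As $j\ge 1$, $v^\bot$ has dimension $n-1\ge n-j$, so we may fix an $(n-j)$-dimensional subspace $\eta_0\subseteq v^\bot$; for $w\in\eta_0$ we have $(x_0+w)\cdot v=x_0\cdot v>h_K(v)$, so $(x_0+\eta_0)\cap K=\emptyset$. I would then observe that the set $U=\{\eta\in{\rm G}_{n,n-j}:(x_0+\eta)\cap K=\emptyset\}$ is open (a compactness-of-$K$ argument: if $x_0+\eta$ misses $K$ then so does $x_0+\eta'$ for $\eta'$ near $\eta$) and nonempty (it contains $\eta_0$), hence has positive Haar measure. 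Since $\eta\mapsto\eta^\bot$ is a measure-preserving homeomorphism of ${\rm G}_{n,n-j}$ onto ${\rm G}_{n,j}$, the set $G:=\{\eta^\bot:\eta\in U\}$ has $\mu_j(G)>0$. Finally, for $\xi=\eta^\bot\in G$ the condition $(x_0+\eta)\cap K=\emptyset$ is exactly $x_0\notin K+\xi^\bot$, i.e. the projection of $x_0$ onto $\xi$ does not lie in $K|\xi$; but it does lie in $L|\xi$ since $x_0\in L$, so $K|\xi\subsetneq L|\xi$.

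The hard part is (ii): finding a \emph{positive-measure} family of good directions. A single separating direction $v$ only supplies the measure-zero set of $(n-j)$-subspaces lying inside $v^\bot$, so the crux is the openness argument that promotes one good subspace to an open family, plus the measure-preserving identification ${\rm G}_{n,n-j}\cong{\rm G}_{n,j}$. (An alternative would be to invoke Kubota's integral formula, which writes the quermassintegral $W_{n-j}$ as a constant multiple of $\int_{{\rm G}_{n,j}}V_j(\,\cdot\,|\xi)\,d\mu_j$, together with strict monotonicity of quermassintegrals; but the direct argument above is self-contained.)
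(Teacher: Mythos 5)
Your proof is correct, but the route to the key point --- producing a \emph{positive-measure} set of good subspaces --- is genuinely different from the paper's. The paper works on the sphere: since $K\subsetneq L$ forces $h_K<h_L$ on some open geodesic cap $U\subseteq S^{n-1}$, it takes $G=\{\xi: \xi\cap U\neq\emptyset\}$ and computes $\mu_j(G)>0$ explicitly via Santal\'o's kinematic formula for the measure of $(j-1)$-planes in elliptic space meeting a geodesic ball of radius $r$; for $\xi\in G$ one then has $h_{K|\xi}(u)<h_{L|\xi}(u)$ at any $u\in\xi\cap U$, whence $K|\xi\subsetneq L|\xi$. You instead separate a point $x_0\in L\setminus K$ from $K$, show that the set of $(n-j)$-subspaces $\eta$ with $(x_0+\eta)\cap K=\emptyset$ is open and nonempty, and transfer it to ${\rm G}_{n,j}$ via the measure-preserving map $\eta\mapsto\eta^\bot$; positivity of measure then comes from the soft fact that the Haar measure on a Grassmannian has full support (an open set of measure zero would, by compactness and transitivity, cover the space with finitely many null translates). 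Your argument is more elementary and self-contained --- it avoids integral geometry entirely --- while the paper's yields an explicit quantitative lower bound for $\mu_j(G)$ in terms of the radius of the cap. You also supply the cone argument for the strict monotonicity $V_j(K|\xi)<V_j(L|\xi)$ under strict inclusion, which the paper merely asserts from convexity. All the individual steps you invoke (positive distance between a compact set and a disjoint closed affine plane, equivariance and hence measure-preservation of $\eta\mapsto\eta^\bot$, $x_0|\xi\notin K|\xi$ iff $(x_0+\xi^\bot)\cap K=\emptyset$) are sound.
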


\begin{proof}
Recall that $h_K$ and $h_L$ are continuous on $S^{n-1}$. So, the
assumption $K\subsetneq L$ implies that there exists an open
geodesic ball $U$ in $S^{n-1}$ such that $\mathcal{H}^{n-1}(U)>0$
and $ h_K(u)<h_L(u)$, for all $u\in U$. Let
\[ G=\{\xi\in G_{n,j}: \xi\cap U\neq\emptyset\}. \]

Next, we aim  to show that
\[ \mu_j(G)>0. \]
Note that when its points are antipodally identified, the sphere
$S^{n-1}$ is identified with an $(n-1)$-dimensional elliptic space
of constant curvature one. The measure $\mu_j(G)$ can be represented
as
\[{\mu _j}(G) = \frac{{(n - j)!j!{\omega _j} \cdots {\omega _1}}}{{n!{\omega _n} \cdots {\omega _{n - j + 1}}}}\int_{G \cap {L_{j - 1}}\neq\emptyset} {d{L_{j - 1}}} ,\]
where $dL_{j-1}$ denotes the kinematic density of a moving
$(j-1)$-dimensional plane $L_{j-1}$ in the elliptic space $S^{n-1}$.
For more details, see Santal$\rm \acute{o}$ \cite[pp.
299-310]{BookSantalo}. Let $r$ be the geodesic radius of $U$. Then
equation (17.52) in \cite{BookSantalo} shows that
\[\int_{G \cap {L_{j - 1}} \ne \emptyset } {d{L_{j - 1}}}  = \frac{{(n - 1)!{\omega _{n - 1}} \cdots {\omega _{n - j}}}}{{(j - 1)!(n - j - 1)!{\omega _{j - 1}} \cdots {\omega _1}}}\int\limits_0^r {{{(\cos t)}^{j - 1}}{{(\sin t)}^{n - j - 1}}dt} .\]
Thus, $\mu_j(G)>0$.

Finally, let $\xi\in G$ and $u\in \xi\cap U$. Since $\xi\cap
U\neq\emptyset$, the definition of $U$ implies that
\[ h_{K|\xi}(u)=h_K(u)<h_L(u)=h_{L|\xi}(u). \]
Since $h_{K|\xi}$ and $h_{L|\xi}$ are continuous on
$S^{n-1}\cap\xi$, and $h_{K|\xi}\le h_{L|\xi}$, we have
$K|\xi\subsetneq L|\xi$. This, combined with the convexity of
$K|\xi$ and $ L|\xi$, implies $V_j(K|\xi)<V_j(L|\xi)$.
\end{proof}

\vskip 25pt
\section{\bf The $\boldsymbol{L_p}$ transference principle}
\vskip 10pt

\subsection{Statement}

In the following, we prove the \emph{$L_p$ transference principle}.
\begin{theorem}\label{theorem3.1}
Suppose that  $F:\mathcal{K}^n\to [0,\infty)$ is positively
homogeneous, increasing and concave, and $p\in (1,\infty)$. If
$K,L\in\mathcal{K}^n_o$,  then
\begin{equation}\label{ineq3.1}
F((1-\alpha)\cdot_p K+_p \alpha\cdot_p L)^p\ge
(1-\alpha)F(K)^p+\alpha F(L)^p, \quad for\;all\; \alpha\in (0,1).
\end{equation}
Furthermore, if $F:\mathcal{K}^n_o\to [0,\infty)$ is strictly
increasing,  equality holds in (\ref{ineq3.1}) if and only if $K$
and $L$ are dilates.
\end{theorem}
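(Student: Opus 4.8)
The plan is to establish the inequality by combining monotonicity of $F$ with its concavity, using Lemma~\ref{lemma2.2} as the bridge between the $L_p$ sum and the ordinary Minkowski convex combination. First I would observe that by Lemma~\ref{lemma2.2} we have the inclusion
\[
(1-\alpha)\cdot_p K +_p \alpha\cdot_p L \;\supseteq\; (1-\alpha)K + \alpha L,
\]
so that the hypothesis that $F$ is increasing yields
\[
F\bigl((1-\alpha)\cdot_p K +_p \alpha\cdot_p L\bigr) \;\ge\; F\bigl((1-\alpha)K + \alpha L\bigr).
\]
Then, since $F$ is concave (i.e.\ $1$-concave) and positively homogeneous of order $1$, the right-hand side is bounded below by $(1-\alpha)F(K) + \alpha F(L)$. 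Thus
\[
F\bigl((1-\alpha)\cdot_p K +_p \alpha\cdot_p L\bigr) \;\ge\; (1-\alpha)F(K) + \alpha F(L).
\]

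Next I would raise both sides to the $p$-th power and apply the elementary convexity inequality for $t\mapsto t^p$ on $[0,\infty)$, $p>1$: for nonnegative $a,b$ and $\alpha\in(0,1)$,
\[
\bigl((1-\alpha)a + \alpha b\bigr)^p \;\ge\; (1-\alpha)a^p + \alpha b^p,
\]
which is immediate from Jensen's inequality. Setting $a=F(K)$, $b=F(L)$ (both nonnegative, so raising the already-established inequality to the $p$-th power is legitimate), this chain gives
\[
F\bigl((1-\alpha)\cdot_p K +_p \alpha\cdot_p L\bigr)^p \;\ge\; \bigl((1-\alpha)F(K) + \alpha F(L)\bigr)^p \;\ge\; (1-\alpha)F(K)^p + \alpha F(L)^p,
\]
which is exactly (\ref{ineq3.1}).

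For the equality characterization under the extra hypothesis that $F$ is strictly increasing, I would trace back through the two inequalities. Equality in the Jensen step $\bigl((1-\alpha)a+\alpha b\bigr)^p = (1-\alpha)a^p+\alpha b^p$ forces $a=b$, i.e.\ $F(K)=F(L)$ (using $p>1$ and $0<\alpha<1$ and strict convexity of $t^p$). Equality in the monotonicity step $F\bigl((1-\alpha)\cdot_p K +_p \alpha\cdot_p L\bigr) = F\bigl((1-\alpha)K+\alpha L\bigr)$, since $F$ is strictly increasing and the inclusion of Lemma~\ref{lemma2.2} holds, forces $(1-\alpha)\cdot_p K +_p \alpha\cdot_p L = (1-\alpha)K+\alpha L$, which by the equality case of Lemma~\ref{lemma2.2} forces $K=L$. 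Conversely, if $K$ and $L$ are dilates, say $L=\mu K$ for some $\mu>0$, one checks directly from the definitions that $(1-\alpha)\cdot_p K +_p \alpha\cdot_p(\mu K) = \bigl((1-\alpha)+\alpha\mu^p\bigr)^{1/p}K$, and then positive homogeneity of $F$ makes both sides of (\ref{ineq3.1}) equal to $\bigl((1-\alpha)+\alpha\mu^p\bigr)F(K)^p$. The main subtlety to be careful about is that the equality analysis requires $K=L$, not merely that $K$ and $L$ are homothetic or dilates — the strict-monotonicity step is what upgrades ``dilates suffice'' to ``equal is necessary'' for this particular pair $(K,L)$, and one must phrase the ``if'' direction correctly: being dilates is sufficient, while the ``only if'' conclusion $K=L$ is for the fixed pair, so the stated equivalence ``$K$ and $L$ are dilates'' is consistent once one notes that $K=L$ is a special case of being dilates and that, conversely, if they are dilates then equality holds. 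I would make sure the write-up resolves this apparent gap cleanly, perhaps by noting that the theorem's equality clause is best read together with the observation that among dilates equality always holds, so the genuine content of the ``only if'' part is the deduction $K=L$ from equality.
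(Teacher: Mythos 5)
There is a fatal error at the point where you pass from the first power to the $p$-th power. For $p>1$ the function $t\mapsto t^p$ is convex, so Jensen's inequality gives
\[
\bigl((1-\alpha)a+\alpha b\bigr)^p \;\le\; (1-\alpha)a^p+\alpha b^p,
\]
i.e.\ the \emph{opposite} of the inequality you invoke. (Take $a=0$, $b=1$, $\alpha=1/2$, $p=2$: the left side is $1/4$ and the right side is $1/2$.) Consequently your chain reads
\[
F\bigl((1-\alpha)\cdot_p K+_p\alpha\cdot_p L\bigr)^p \;\ge\; \bigl((1-\alpha)F(K)+\alpha F(L)\bigr)^p \;\le\; (1-\alpha)F(K)^p+\alpha F(L)^p,
\]
with the two inequalities pointing in opposite directions, so nothing follows. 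Your own equality analysis is a symptom of the same problem: it would force $K=L$, yet equality in (\ref{ineq3.1}) must hold whenever $K$ and $L$ are (unequal) dilates, e.g.\ $L=2K$ — so any argument whose equality case yields $K=L$ cannot be proving the right inequality.

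The paper avoids the power-mean step entirely by a normalization. Assuming $F(K)F(L)>0$ (the degenerate case is handled separately by monotonicity and homogeneity), one checks from the definition of $L_p$ scalar multiplication that
\[
(1-\alpha)\cdot_p K+_p\alpha\cdot_p L \;=\; c\cdot\Bigl[(1-\alpha')\cdot_p\tfrac{K}{F(K)}+_p\alpha'\cdot_p\tfrac{L}{F(L)}\Bigr],
\quad c=\bigl((1-\alpha)F(K)^p+\alpha F(L)^p\bigr)^{1/p},
\quad \alpha'=\frac{\alpha F(L)^p}{c^p}.
\]
Applying your first two steps (Lemma~\ref{lemma2.2} plus monotonicity, then concavity and homogeneity) to the \emph{normalized} bodies $K/F(K)$ and $L/F(L)$, on which $F$ equals $1$, gives $F\bigl((1-\alpha')\cdot_p\frac{K}{F(K)}+_p\alpha'\cdot_p\frac{L}{F(L)}\bigr)\ge(1-\alpha')+\alpha'=1$, which after multiplying by $c$ is exactly (\ref{ineq3.1}); no comparison of $\bigl((1-\alpha)a+\alpha b\bigr)^p$ with $(1-\alpha)a^p+\alpha b^p$ is ever needed. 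The equality case then forces $K/F(K)=L/F(L)$ via Lemma~\ref{lemma2.2}, i.e.\ $K$ and $L$ are dilates, which is consistent with the sufficiency direction you verified correctly. So your ingredients (Lemma~\ref{lemma2.2}, monotonicity, concavity, homogeneity) are the right ones, but they must be applied to the rescaled bodies with the reweighted parameter $\alpha'$; as written, the proof does not go through.
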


\begin{proof}
If $F(K)F(L)=0$, then inequality (\ref{ineq3.1}) holds. To see this,
 assume $F(K)=0$. Then the definitions
of $(1-\alpha)\cdot_p K+_p\alpha\cdot_p L$ and $\alpha\cdot_p L$
directly imply that \[(1-\alpha)\cdot_p K+_p\alpha\cdot_p L\supseteq
\alpha\cdot_p L =\alpha^{\frac{1}{p}}L.\]
From the monotonicity and positive homogeneity of $F$, we have
\[F((1-\alpha)\cdot_p K+_p\alpha\cdot_p L)^p\ge F\left(\alpha^{\frac{1}{p}}L\right)^p=\alpha F(L)^p.\]

So, we  assume that $F(K)F(L)>0$. Then inequality (\ref{ineq3.1}) is
equivalent to
\[\frac{{F\left( {(1 - \alpha){ \cdot _p}K{ + _p}\alpha{ \cdot _p}L} \right)}}{{{{\left[ {(1 - \alpha)F{{(K)}^p} + \alpha F{{(L)}^p}} \right]}^{\frac{1}{p}}}}} \ge 1.\]
By the positive homogeneity of $F,$   this is equivalent to
\[F\left( {\frac{{(1 - \alpha){ \cdot _p}K{ + _p}\alpha{ \cdot _p}L}}{{{{\left[ {(1 - \alpha)F{{(K)}^p} + \alpha F{{(L)}^p}} \right]}^{\frac{1}{p}}}}}} \right) \ge 1.\]
From the definition of $L_p$ scalar multiplication,  this is
equivalent to
\begin{equation}\label{ineq3.2}
F\left( {\left( {\frac{{1 - \alpha}}{{(1 - \alpha)F{{(K)}^p} +
\alpha F{{(L)}^p}}}} \right){ \cdot _p}K{ + _p}\left(
{\frac{\alpha}{{(1 - \alpha)F{{(K)}^p} + \alpha F{{(L)}^p}}}} \right){ \cdot _p}L} \right) \ge 1.
\end{equation}

Now,  again using the definition of $L_p$ scalar multiplication, we have
\begin{align*}
&\left( {\frac{{1 - \alpha}}{{(1 - \alpha )F{{(K)}^p} + \alpha F{{(L)}^p}}}} \right){ \cdot _p}K{ + _p}\left( {\frac{\alpha}{{(1 - \alpha)F{{(K)}^p} + \alpha F{{(L)}^p}}}} \right){ \cdot _p}L \\
&\quad = \left( {\frac{{(1 - \alpha)F{{(K)}^p}}}{{(1 - \alpha)F{{(K)}^p} + \alpha F{{(L)}^p}}}} \right){ \cdot _p}\left( {\frac{K}{{F(K)}}} \right){ + _p}\left( {\frac{{\alpha F{{(L)}^p}}}{{(1 - \alpha )F{{(K)}^p} + \alpha F{{(L)}^p}}}} \right){ \cdot _p}\left( {\frac{L}{{F(L)}}} \right) \\
&\quad= (1 - \alpha'){ \cdot _p}\left( {\frac{K}{{F(K)}}} \right){ +
_p}\alpha'{ \cdot _p}\left( {\frac{L}{{F(L)}}} \right),
\end{align*}
where
\[\alpha' = \frac{{\alpha F{{(L)}^p}}}{{(1 - \alpha )F{{(K)}^p} + \alpha F{{(L)}^p}}}.\]
By Lemma 2.2, we have
\[(1 - \alpha'){ \cdot _p}\left( {\frac{K}{{F(K)}}} \right){ + _p}\alpha'{ \cdot _p}\left( {\frac{L}{{F(L)}}} \right) \supseteq (1 - \alpha')\left( {\frac{K}{{F(K)}}} \right){ +}\alpha'\left( {\frac{L}{{F(L)}}} \right).\]

Hence, from the definition of $\alpha'$, the above inclusion
together with the monotonicity of $F$, the concavity of $F$, and the
positive homogeneity of $F$, it follows that
\begin{align*}
&F\left( {\left( {\frac{{1 - \alpha}}{{(1 - \alpha)F{{(K)}^p} + \alpha F{{(L)}^p}}}} \right){ \cdot _p}K{ + _p}\left( {\frac{\alpha}{{(1 - \alpha)F{{(K)}^p} + \alpha F{{(L)}^p}}}} \right){ \cdot _p}L} \right) \\
&\quad= F\left( {(1 - \alpha'){ \cdot _p}\left( {\frac{K}{{F(K)}}} \right){ + _p}\alpha'{ \cdot _p}\left( {\frac{L}{{F(L)}}} \right)} \right) \\
&\quad\ge F\left( {(1 - \alpha')\left( {\frac{K}{{F(K)}}} \right) + \alpha'\left( {\frac{L}{{F(L)}}} \right)} \right) \\
&\quad\ge (1 - \alpha')F\left(    {\frac{K}{{F(K)}}}   \right) + \alpha'F\left(   {\frac{L}{{F(L)}}}   \right) \\
&\quad= (1 - \alpha') + \alpha' \\
&\quad= 1.
\end{align*}
This establishes inequality (\ref{ineq3.2}).  Therefore,  inequality
(\ref{ineq3.1}) holds.

Finally, under the additional assumption  that $F$ is strictly
increasing on $\mathcal{K}^n_o$, we aim to prove the equality
condition. Note that the strict monotonicity and
positive homogeneity of $F$ imply that $F$ is positive on
$\mathcal{K}^n_o$.

Assume that equality holds in (\ref{ineq3.1}). Then
\begin{align*}
&F\left( {(1 - \alpha'){ \cdot _p}\left( {\frac{K}{{F(K)}}} \right){
+ _p}\alpha'{ \cdot _p}\left( {\frac{L}{{F(L)}}} \right)} \right)\\
&\quad = F\left( {(1 - \alpha')\left( {\frac{K}{{F(K)}}} \right) +
\alpha'\left( {\frac{L}{{F(L)}}} \right)} \right).
\end{align*}
By the strict  monotonicity of $F$, this implies that
\[(1 - \alpha'){ \cdot _p}\left( {\frac{K}{{F(K)}}} \right){ + _p}\alpha'{ \cdot _p}\left( {\frac{L}{{F(L)}}} \right) = (1 - \alpha')\left( {\frac{K}{{F(K)}}} \right) + \alpha'\left( {\frac{L}{{F(L)}}} \right).\]
This equation, combined with Lemma 2.2, implies that
\[\frac{K}{{F(K)}} = \frac{L}{{F(L)}},\]
which shows that $K$ and $L$ are dilates.

Conversely, assume that $K$ and $L$ are dilates, say $K=\beta L$,
for some constant $\beta>0$. From the definition of $L_p$
combination of convex bodies,
\begin{align*}
(1 - \alpha){ \cdot _p}K{ + _p}\alpha{ \cdot _p}L &= (1 - \alpha){ \cdot _p}(\beta L){ + _p}\alpha{ \cdot _p}L \\
&= {(1 - \alpha)^{\frac{1}{p}}}\beta L{ + _p}{\alpha^{\frac{1}{p}}}L \\
&= {\left( {(1 - \alpha){\beta^p} + \alpha} \right)^{\frac{1}{p}}}L.
\end{align*}
From this and the positive homogeneity of $F$, it follows that
\begin{align*}
F{((1 - \alpha){ \cdot _p}K{ + _p}\alpha{ \cdot _p}L)^p} &= F{\left( {{{\left( {(1 - \alpha){\beta^p} + \alpha} \right)}^{\frac{1}{p}}}L} \right)^p} \\
&= (1 - \alpha){\beta^p}F{(L)^p} + \alpha F{(L)^p} \\
&= (1 - \alpha)F{(\beta L)^p} + \alpha F{(L)^p} \\
&= (1 - \alpha)F{(K)^p} + \alpha F{(L)^p},
\end{align*}
which shows that equality holds in (\ref{ineq3.1}).
\end{proof}

Theorem \ref{theorem3.1} immediately yields the following corollary.
\begin{corollary}\label{cor3.1}
Suppose that  $F:\mathcal{K}^n\to [0,\infty)$ is positively
homogeneous,  increasing and concave, and $p\in (1,\infty)$. If
$K,L\in\mathcal{K}^n_o$, then
\begin{equation}\label{ineq3.3}
F(K+_pL)^p\ge F(K)^p + F(L)^p.
\end{equation}
Furthermore, if $F:\mathcal{K}^n_o\to [0,\infty)$ is strictly
increasing,  equality holds in (\ref{ineq3.3}) if and only if $K$
and $L$ are dilates.
\end{corollary}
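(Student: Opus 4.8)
The plan is to derive Corollary~\ref{cor3.1} directly from Theorem~\ref{theorem3.1} by a scaling argument, exploiting the positive homogeneity of $F$ and of the $L_p$ combination. First I would dispose of the degenerate case $F(K)F(L)=0$: if, say, $F(K)=0$, then $K+_pL\supseteq L$ (since $h_{K+_pL}(u)^p = h_K(u)^p + h_L(u)^p \ge h_L(u)^p$), so by monotonicity and positive homogeneity $F(K+_pL)^p \ge F(L)^p = F(K)^p+F(L)^p$, and the inequality holds. In the main case $F(K)F(L)>0$, I would pick a suitable parameter $\alpha\in(0,1)$ so that the $L_p$ combination $(1-\alpha)\cdot_p K +_p \alpha\cdot_p L$ becomes, up to a positive dilation, exactly $K+_pL$.

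The key computation is the following. For $\alpha\in(0,1)$ write $c=((1-\alpha)^{-1})^{1/p}$ for the scalar with $c^p(1-\alpha)=1$; more symmetrically, observe that
\[
\Big(\tfrac{1}{1-\alpha}\Big)^{\!1/p}\!\big((1-\alpha)\cdot_p K +_p \alpha\cdot_p L\big)
= K +_p \Big(\tfrac{\alpha}{1-\alpha}\Big)^{\!1/p} L ,
\]
because raising support functions to the $p$-th power turns the left side into $\tfrac{1}{1-\alpha}\big((1-\alpha)h_K^p + \alpha h_L^p\big) = h_K^p + \tfrac{\alpha}{1-\alpha}h_L^p$. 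Hence if I choose $\alpha$ so that $\tfrac{\alpha}{1-\alpha}=1$, i.e. $\alpha=\tfrac12$, then $(\tfrac{1}{1-\alpha})^{1/p}\big((1-\alpha)\cdot_p K +_p \alpha\cdot_p L\big) = K +_p L$, i.e. $2^{1/p}\big(\tfrac12\cdot_p K +_p \tfrac12\cdot_p L\big) = K+_pL$. Applying positive homogeneity of $F$ and then Theorem~\ref{theorem3.1} with $\alpha=\tfrac12$,
\begin{align*}
F(K+_pL)^p &= F\!\big(2^{1/p}(\tfrac12\cdot_p K +_p \tfrac12\cdot_p L)\big)^p
= 2\,F\!\big(\tfrac12\cdot_p K +_p \tfrac12\cdot_p L\big)^p \\
&\ge 2\Big(\tfrac12 F(K)^p + \tfrac12 F(L)^p\Big)
= F(K)^p + F(L)^p,
\end{align*}
which is (\ref{ineq3.3}).

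For the equality characterization, when $F$ is strictly increasing on $\mathcal{K}^n_o$ (hence positive there), the chain above is an equality iff equality holds in Theorem~\ref{theorem3.1} applied to $K,L$ with $\alpha=\tfrac12$; by the equality clause of that theorem this happens iff $K$ and $L$ are dilates. Conversely, if $K=\beta L$ then $K+_pL = (\beta^p+1)^{1/p}L$, so $F(K+_pL)^p = (\beta^p+1)F(L)^p = F(K)^p+F(L)^p$, giving equality. I do not expect any real obstacle here: the only point requiring a little care is the homogeneity bookkeeping in the scaling identity $2^{1/p}(\tfrac12\cdot_p K +_p \tfrac12\cdot_p L)=K+_pL$ and making sure the degenerate case is handled before dividing, but both are routine given Lemma~\ref{lemma2.1}, Lemma~\ref{lemma2.2} and Theorem~\ref{theorem3.1}.
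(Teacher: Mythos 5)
Your proposal is correct and follows essentially the same route as the paper: both reduce (\ref{ineq3.3}) to Theorem~\ref{theorem3.1} by a homogeneity/scaling identity (the paper writes $K+_pL=(1-\alpha)\cdot_p((1-\alpha)^{-1/p}K)+_p\alpha\cdot_p(\alpha^{-1/p}L)$ for arbitrary $\alpha$, while you fix $\alpha=\tfrac12$ and pull the factor $2^{1/p}$ out of the combination) and then transfer the equality condition the same way. Your separate treatment of the case $F(K)F(L)=0$ is harmless but unnecessary, since Theorem~\ref{theorem3.1} already covers it.
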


\begin{proof}
Let $\alpha\in (0,1)$. From the definition of $L_p$ scalar
multiplication, Theorem 3.1 and the positive homogeneity of $F$, it
follows that
\begin{align*}
F{(K{ + _p}L)^p} &= F{\left( {(1 - \alpha){ \cdot _p}({{(1 - \alpha)}^{ - \frac{1}{p}}}K){ + _p}\alpha{ \cdot _p}({\alpha^{ - \frac{1}{p}}}L)} \right)^p} \\
&\ge (1 - \alpha)F{\left( {{{(1 - \alpha)}^{ - \frac{1}{p}}}K} \right)^p} + \alpha F{\left( {{\alpha^{ - \frac{1}{p}}}L} \right)^p} \\
&= F{(K)^p} + F{(L)^p},
\end{align*}
which is precisely  (\ref{ineq3.3}).  If the monotonicity of $F$ is
strict, equality holds in the second line if and only if
$(1-\alpha)^{-{1}/{p}}K$ and $\alpha^{-{1}/{p}}L$ are dilates, and
therefore if and only if $K$ and $L$ are dilates.
\end{proof}

For convex bodies $K,L\in\mathcal{K}^n$, their Hausdorff distance is
\[ \delta_H(K,L)=\max\{|h_K(u)-h_L(u)|: u\in S^{n-1}\}. \]
From the definition of $L_p$ addition, we have $K+_p L\to K+_\infty
L$, as $p\to\infty$. Assume functional $F$ in Theorem
\ref{theorem3.1} (or Corollary \ref{cor3.1}) is continuous with
respect to $\delta_H$. Then by the monotonicity of $F$ and the
definition of $K+_\infty L$, letting $p\to\infty$, inequality
(\ref{ineq3.1}) (or (\ref{ineq3.3})) yields
\[ F(K+_\infty L)\ge\max\{ F(K), F(L)\}. \]
Furthermore, if $F$ is strictly increasing, equality holds if and
only if either $K$ or $L$ is a subset of the other set.

By the $L_p$ transference principle, we can immediately  obtain  the
$L_p$ Brunn-Minkowski type inequality for quermassintegrals first
established by Firey \cite{Firey}.

\begin{example}
For a convex body $K\in\mathcal{K}^n$, its quermassintegrals
$W_0(K)$, $W_1(K)$, $\ldots$, $W_{n-1}(K)$ are defined by
$W_0(K)=V_n(K)$, and
\[W_{n-j}(K)=\frac{\omega_n}{\omega_{j}}\int_{{\rm G}_{n,j}}V_{j}(K|\xi)d\mu_{j}(\xi), \quad j = 1,\ldots,n-1. \]
Now, the functional
\[{W_{n-j}}^{\frac{1}{j}}:\quad \mathcal{K}^n\to (0,\infty),\quad K\mapsto W_{n-j}(K)^{\frac{1}{j}} \]
is positively homogeneous and increasing. A Brunn-Minkowski
inequality for  $W_{n-j}$ reads as follows: If
 $K,L\in\mathcal{K}^n$ and $0<\alpha<1$, then
\[{W_{n - j}}{((1 - \alpha )K + \alpha L)^{\frac{1}{j}}} \ge (1 - \alpha ){W_{n - j}}{(K)^{\frac{1}{j}}} + \alpha {W_{n - j}}{(L)^{\frac{1}{j}}},\]
with equality if and only if $K$ and $L$ are homothetic. So,
${W_{n-j}}^{{1}/{j}}$ is concave. See, e.g., Gardner \cite[p.
393]{Gardner1}.

Thus, by Corollary \ref{cor3.1}, we directly obtain Firey's $L_p$
Brunn-Minkowski inequality for quermassintegrals: If
$K,L\in\mathcal{K}^n_o$ and $1<p<\infty$, then
\begin{equation}\label{LpBMquermassintegrals}
W_{n-j}(K+_pL)^\frac{p}{j}\ge W_{n-j}(K)^{\frac{p}{j}}+ W_{n-j}(L)^{\frac{p}{j}}.
\end{equation}
From the definition of $W_{n-j}$ and Lemma \ref{lemma2.3}, we know
that ${W_{n-j}}^{{1}/{j}}$ is strictly increasing. Hence, equality
holds in (3.4) if and only if $K$ and $L$ are dilates.
\end{example}

\subsection{Characterizations of equality conditions}

For many $L_p$ Brunn-Minkowski type inequalities, equality only
occurs when the convex bodies are dilates. This phenomenon can be
completely characterized.

\begin{theorem}\label{theorem3.2}
Suppose that  $F:\mathcal{K}^n\to [0,\infty)$ is positively
homogeneous, increasing and concave, and $p\in (1,\infty)$. Then the
following assertions are equivalent.

\noindent (1) For $K, L\in\mathcal{K}^n_o$, the  function
$F_{p;K,L}$ is affine if and only if $K$ and $L$ are dilates.

\noindent (2)  When restricted to $\mathcal{K}^n_o$, the functional
$F$ is strictly increasing.
\end{theorem}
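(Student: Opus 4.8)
The plan is to prove the two implications separately, using the already–established $L_p$ transference principle (Theorem \ref{theorem3.1}) as the main engine.

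For the direction $(2)\Rightarrow(1)$: assume $F$ is strictly increasing on $\mathcal{K}^n_o$. If $K$ and $L$ are dilates, then the computation at the end of the proof of Theorem \ref{theorem3.1} (with $K=\beta L$) shows directly that $F_{p;K,L}(\alpha)=(1-\alpha)\beta^pF(L)^p+\alpha F(L)^p$, which is affine in $\alpha$. Conversely, suppose $F_{p;K,L}$ is affine. Being affine, it agrees with the line joining its endpoint values, so $F_{p;K,L}(\alpha)=(1-\alpha)F(K)^p+\alpha F(L)^p$ for every $\alpha\in(0,1)$; that is, equality holds in inequality (\ref{ineq3.1}) for some (indeed all) $\alpha\in(0,1)$. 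Since $F$ is strictly increasing on $\mathcal{K}^n_o$, the equality case of Theorem \ref{theorem3.1} forces $K$ and $L$ to be dilates. This settles $(2)\Rightarrow(1)$.

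For the direction $(1)\Rightarrow(2)$, I would argue by contrapositive: suppose $F$ is \emph{not} strictly increasing when restricted to $\mathcal{K}^n_o$, and produce two bodies $K,L\in\mathcal{K}^n_o$ that are not dilates but for which $F_{p;K,L}$ is affine. Failure of strict monotonicity means there exist $K_0\subsetneq L_0$ in $\mathcal{K}^n_o$ with $F(K_0)=F(L_0)$. The key observation is that for such a pair, monotonicity together with concavity pins $F$ down on the whole "segment" between them: for any $M\in\mathcal{K}^n_o$ with $K_0\subseteq M\subseteq L_0$ we get $F(K_0)\le F(M)\le F(L_0)$, hence $F(M)=F(K_0)$, so $F$ is constant on this order-interval. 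Now I would take $K=K_0$ and $L=L_0$ (which are not dilates, since a proper subset cannot be a dilate of its superset when both contain the origin in their interiors — a dilate $\beta L_0$ with $\beta<1$ does not contain $L_0$, and $\beta\ge1$ contradicts $K_0\subsetneq L_0$). By Lemma \ref{lemma2.2}, for each $\alpha\in(0,1)$ the body $(1-\alpha)\cdot_pK+_p\alpha\cdot_pL$ contains $(1-\alpha)K+\alpha L\supseteq K_0$ (using $K_0\subseteq L_0$), and it is also contained in... here I must check it lies inside $L_0$: since $h_K,h_L\le h_{L_0}$ and $p$-sum of support functions with convex weights of values $\le h_{L_0}$ again gives something $\le h_{L_0}$, we indeed have $(1-\alpha)\cdot_pK+_p\alpha\cdot_pL\subseteq L_0$. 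Therefore $F_{p;K,L}(\alpha)=F(K_0)^p$ is constant, hence affine, while $K,L$ are not dilates — contradicting assertion (1).

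The main obstacle I anticipate is precisely this bookkeeping in $(1)\Rightarrow(2)$: one must verify that the $L_p$-combination stays inside $L_0$, and that the pair $(K_0,L_0)$ genuinely fails the "dilates" condition; both are elementary but need the hypothesis that bodies contain the origin in their interior (so that $h_{K_0}\le h_{L_0}$ with $K_0\subsetneq L_0$, combined with $K_0=\beta L_0$, would force $0<\beta<1$ and then $h_{K_0}=\beta h_{L_0}<h_{L_0}$ everywhere, contradicting nothing by itself — so the real point is that $K_0\subsetneq L_0$ and $K_0$ a dilate of $L_0$ are compatible, e.g. $K_0=\frac12 L_0$; thus I should instead choose a non-dilate pair). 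To be safe I would take $L_0$ arbitrary in $\mathcal{K}^n_o$ with $K_0\subsetneq L_0$, $K_0$ \emph{not} homothetic to $L_0$ and in fact not a dilate — such a pair exists whenever the order-interval is nontrivial, e.g. perturb $L_0$ asymmetrically — and then the constancy of $F$ on the interval gives the desired counterexample. Modulo this care, both implications follow cleanly from Lemma \ref{lemma2.2} and the equality analysis already carried out for Theorem \ref{theorem3.1}.
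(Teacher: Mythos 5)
Your direction $(2)\Rightarrow(1)$ is correct and is exactly the paper's: affineness of $F_{p;K,L}$ forces equality in (\ref{ineq3.1}), and the equality case of Theorem \ref{theorem3.1} (which is where strict monotonicity enters) yields that $K$ and $L$ are dilates; the converse is the explicit computation for $K=\beta L$ already done there.

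The problem is the endgame of $(1)\Rightarrow(2)$, precisely at the point you flag. Failure of strict monotonicity hands you one specific pair $K_0\subsetneq L_0$ with $F(K_0)=F(L_0)$; you do not get to ``take $L_0$ arbitrary'' or ``perturb $L_0$ asymmetrically,'' because any replacement must still satisfy the equality of $F$-values, which is not at your disposal. As written, your argument that $(K_0,L_0)$ cannot be dilates is wrong (you notice this: $K_0=\tfrac12 L_0\subsetneq L_0$ is perfectly possible), and the proposed repair is only gestured at. The paper closes this gap without any construction: it applies assertion (1) to the pair $(K_0,L_0)$ itself (your computation that $K_0\subseteq(1-\alpha)\cdot_pK_0+_p\alpha\cdot_pL_0\subseteq L_0$, hence $F_{p;K_0,L_0}$ is constant, is the same as the paper's), concludes $K_0=\beta L_0$ for some $\beta>0$, and then uses positive homogeneity together with $F(K_0)=F(L_0)$ and the strict positivity of $F$ on $\mathcal{K}^n_o$ to force $\beta F(L_0)=F(L_0)$, i.e.\ $\beta=1$, so $K_0=L_0$, contradicting $K_0\subsetneq L_0$. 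In other words, the ``bad case'' where the witness pair consists of dilates is impossible, by homogeneity alone. Your alternative route --- exhibiting a non-dilate pair inside the order interval on which $F$ is constant --- can also be completed (e.g.\ take $M=\mathrm{conv}(K_0\cup\{x\})$ for some $x\in L_0\setminus K_0$; then $K_0\subsetneq M\subseteq L_0$, and $M$ is not a dilate of $K_0$ since $h_M=h_{K_0}$ on an open set of directions while $h_{\beta K_0}=\beta h_{K_0}\ne h_{K_0}$ everywhere for $\beta\ne1$), but this construction must actually be carried out; the bare assertion that ``such a pair exists whenever the order-interval is nontrivial'' is the missing step, and the homogeneity argument renders it unnecessary.
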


\begin{proof}
The implication ``$(2)\Rightarrow (1)$" is shown by Theorem 3.1.
Next, we prove the implication ``$(1)\Rightarrow (2)$" by
contradiction. Assume that there exist $K_0, L_0\in\mathcal{K}^n_o$
such that $K_0 \subsetneq L_0$ but $F(K_0)=F(L_0)$.

For any $\alpha\in (0,1)$,  let $K_\alpha=(1-\alpha)\cdot_p K_0 +_p
\alpha\cdot_p L_0$. Then
\[ K_0\subset K_\alpha \subset L_0. \]
By the monotonicity of $F$, we have
\[ F(K_0)\le F\left(K_\alpha\right)\le F(L_0). \]
This, together with the assumption $F(K_0)=F(L_0)$, yields
\[ F\left(K_\alpha\right)^p= (1-\alpha)F(K_0)^p+\alpha F(L_0)^p. \]
Thus, assertion (1) implies that $K_0$ and $L_0$ are dilates, say
$K_0=\beta L_0$, for some $\beta>0$. From the positive homogeneity
of $F$ and the assumption $F(K_0)=F(L_0)$ again, we have
\[ F(K_0) = F(\beta L_0) = \beta F(L_0) = F(L_0). \]
Note that $F$ is strictly positive. So, $\beta=1$, and therefore
\[ K_0=L_0, \]
which contradicts the assumption  that $K_0\neq L_0$.
\end{proof}

We say  $F$ is  \emph{translation invariant} if $F(K+x)=F(x)$ for
all $x\in \mathbb{R}^n$.

\begin{theorem}\label{theorem3.3}
Suppose that $F:\mathcal{K}^n\to [0,\infty)$ is translation
invariant, positively homogeneous, increasing and concave, and $p\in
(1,\infty)$. Then the following assertion (1) implies assertion (2).

\noindent (1) For  $K,L\in\mathcal{K}^n$, the function $F_{1;K,L}$
is affine if and only if $K$ and $L$ are homothetic.

\noindent (2)  For $K,L\in\mathcal{K}^n_o$, the function $F_{p;K,L}$
is affine if and only if $K$ and $L$ are dilates.
\end{theorem}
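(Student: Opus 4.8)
The plan is to deduce Theorem~\ref{theorem3.3} from Theorem~\ref{theorem3.2}. By that theorem, assertion (2) is equivalent to the statement that $F$, when restricted to $\mathcal{K}^n_o$, is strictly increasing; so it will be enough to show that hypothesis (1) forces this strict monotonicity. As a preliminary I would first note that (1) implies $F\not\equiv 0$ on $\mathcal{K}^n$ — otherwise $F_{1;K,L}\equiv 0$ would be affine for \emph{every} pair $K,L$, although not every pair of convex bodies is homothetic — and hence, by monotonicity, translation invariance and positive homogeneity, $F$ is strictly positive on all of $\mathcal{K}^n_o$ (every body in $\mathcal{K}^n_o$ contains a scaled translate of a body on which $F$ is positive, and then $F(A)\ge F(\lambda K_*+c)=F(\lambda K_*)=\lambda F(K_*)>0$).

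To prove strict monotonicity I would argue by contradiction: suppose $A,B\in\mathcal{K}^n_o$ with $A\subsetneq B$ but $F(A)=F(B)$. The first step is to show $F_{1;A,B}$ is constant. For $t\in[0,1]$ one has $A\subseteq(1-t)A+tB\subseteq B$ (the left inclusion because $(1-t)A+tB\supseteq(1-t)A+tA=A$, the right one because $(1-t)A+tB\subseteq(1-t)B+tB=B$, both using $A\subseteq B$), so monotonicity of $F$ squeezes $F\big((1-t)A+tB\big)$ between $F(A)$ and $F(B)=F(A)$; hence $F_{1;A,B}\equiv F(A)$, which is affine. By hypothesis (1), $A$ and $B$ are homothetic, say $B=\gamma A+x$ with $\gamma>0$ and $x\in\mathbb{R}^n$.

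The last step is to rule this out. Comparing volumes gives $V_n(A)\le V_n(B)=\gamma^n V_n(A)$ with $V_n(A)>0$, so $\gamma\ge 1$; and $\gamma=1$ is impossible, since a bounded set cannot be a proper subset of a translate of itself (so $B=A+x$ would force $x=0$ and $B=A$). Thus $\gamma>1$, and now translation invariance together with positive homogeneity yields
\[ F(B)=F(\gamma A+x)=F(\gamma A)=\gamma F(A)>F(A), \]
contradicting $F(A)=F(B)$. Therefore $F$ is strictly increasing on $\mathcal{K}^n_o$, and Theorem~\ref{theorem3.2} delivers assertion (2).

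The heavy lifting here is really done inside Theorem~\ref{theorem3.2}, so the only genuine subtlety I expect is the bookkeeping around translation invariance: it is what makes the last display work (it lets one discard the translation vector $x$) and is precisely the extra hypothesis present here but absent from Theorem~\ref{theorem3.2}. If one wished to avoid invoking Theorem~\ref{theorem3.2}, a self-contained alternative would normalize $K,L$ so that $F(K)=F(L)$, check that the corresponding $F_{1;\,\cdot\,,\,\cdot\,}$ collapses to the constant $1$, apply (1) to get a translate $K'=L'+v_0$, and then pin down $v_0=0$ by feeding an auxiliary midpoint body back through the identity in the proof of Lemma~\ref{lemma2.1} and invoking the strict convexity of $t\mapsto t^p$; but the route through Theorem~\ref{theorem3.2} is considerably shorter.
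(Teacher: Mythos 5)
Your proof is correct, but it takes a genuinely different route from the paper's. You reduce the statement to Theorem~\ref{theorem3.2}: since that theorem says assertion (2) is equivalent to strict monotonicity of $F$ on $\mathcal{K}^n_o$, you only need to show hypothesis (1) forces strict monotonicity, which you do by a squeeze argument ($A\subseteq(1-t)A+tB\subseteq B$ makes $F_{1;A,B}$ constant, hence affine, hence $B=\gamma A+x$) followed by a volume comparison and translation invariance to rule out $F(A)=F(B)$. The paper instead argues directly: assuming equality holds for some non-dilate pair, it normalizes to bodies $A_0=A_1+x_0$ with $x_0\neq 0$, constructs the interpolating family $A^{(r)}=(1-\alpha_1)\cdot_r A_0+_r\alpha_1\cdot_r A_1$ for $1\le r\le p$, shows via strict convexity of power functions that $A^{(1)}\subseteq A^{(p)}$ are \emph{not} homothetic, and then applies the same squeeze trick to $(1-\beta)A^{(1)}+\beta A^{(p)}$ to force them to be homothetic by (1) --- a contradiction. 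Your route is shorter and delegates the $L_p$ machinery entirely to Theorem~\ref{theorem3.2}, at the cost of being less self-contained; the paper's construction makes explicit where the two addition operations diverge. One point in your favor: you justify the strict positivity of $F$ on $\mathcal{K}^n_o$ (via $F\not\equiv 0$, monotonicity, translation invariance and homogeneity), which the paper uses implicitly when it divides by $F(K_0)$ and $F(L_0)$ and invokes ``strict positivity'' without proof.
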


\begin{proof}
Suppose that (1) holds but (2) does not hold, specifically that
there exists an $\alpha_0\in (0,1)$ and $K_0,L_0\in\mathcal{K}^n_o$,
which are not dilates, such that
\[ F\left((1-\alpha_0)\cdot_p K_0 +_p \alpha_0\cdot_p L_0 \right)^p =(1-\alpha_0) F(K_0)^p+\alpha_0 F(L_0)^p. \]
Let
\begin{align*}
&{\alpha _1} = \frac{{{\alpha _0}F{{({L_0})}^p}}}{{(1 - {\alpha _0})F{{({K_0})}^p} + {\alpha _0}F{{({L_0})}^p}}}, \\
&{A_0} = \frac{{{K_0}}}{{F({K_0})}}, \\
&{A_1} = \frac{{{L_0}}}{{F({L_0})}},
\end{align*}
and
\[{A^{(r)}} = (1 - {\alpha _1}){ \cdot _r}{A_0}{ + _r}{\alpha _1}{ \cdot _r}{A_1},\quad {\rm for}\; 1\le r\le p.\]
Clearly, $F(A_0)=F(A_1)=1$.

From Lemma 2.2 and the monotonicity of $F$, we have
\begin{align*}
F\left( {(1 - {\alpha _1}){ \cdot _p}{A_0}{ + _p}{\alpha _1}{ \cdot _p}{A_1}} \right) &\ge F\left( {(1 - {\alpha _1}){A_0} + {\alpha _1}{A_1}} \right) \\
&\ge (1 - {\alpha _1})F({A_0}) + {\alpha _1}F({A_1}).
\end{align*}
Meanwhile, the assumptions yield
\begin{equation}\label{provedkeyineq1}
F\left( {(1 - {\alpha _1}){ \cdot _p}{A_0}{ + _p}{\alpha _1}{ \cdot
_p}{A_1}} \right) = (1-\alpha_1)F(A_0)+\alpha_1F(A_1).
\end{equation}
Hence,
\begin{equation}\label{provedkeyineq2}
F\left( (1-\alpha_1)A_0+\alpha_1 A_1 \right)= (1-\alpha_1) F(A_0) +
\alpha_1 F(A_1).
\end{equation}
Thus, from assertion (1),  there exist $\lambda_0>0$ and
$x_0\in\mathbb{R}^n$ such that
\[ A_0 = \lambda_0 A_1 +x_0. \]

But then, from the positive homogeneity,  translation invariance and
strict positivity of $F$,  we have
\begin{align*}
F(A_1)
&=F(A_0)\\
&= F(\lambda_0 A_1 +x_0)\\
&=\lambda_0 F(A_1).
\end{align*}
Thus,  $\lambda_0=1$.

With $A_0=A_1+x_0$ in hand, for $u\in S^{n-1}$ and $1\le r\le p$, we
have
\[{h_{{A^{(r)}}}}(u) = {\left[ {(1 - {\alpha _1}){{({h_{{A_1}}}(u) + {x_0} \cdot u)}^r} + {\alpha _1}{h_{{A_1}}}{{(u)}^r}} \right]^{\frac{1}{r}}}\]
and
\[{h_{{A_1}}}(u) + {x_0} \cdot u > 0.\]
Thus, for $u\in S^{n-1}$, two observations are in order.

First, if $u\cdot x_0=0$, then for $1\le r_1\le r_2\le p$,
\[{h_{{A^{(1)}}}}(u) = {h_{{A^{({r_1})}}}}(u) = {h_{{A^{({r_2})}}}}(u) = {h_{{A^{(p)}}}}(u).\]

Second, if $u\cdot x_0\ne 0$, then the strict convexity of power
functions  implies that for $1< r_1< r_2< p$,
\[{h_{{A^{(1)}}}}(u) < {h_{{A^{({r_1})}}}}(u) < {h_{{A^{({r_2})}}}}(u) < {h_{{A^{(p)}}}}(u).\]
Consequently, for $1\le r_1<r_2\le p$, we conclude that

\noindent (a) $A^{(1)}\subseteq A^{(r_1)}\subseteq
A^{(r_2)}\subseteq A^{(p)}$ and

\noindent  (b)  $A^{(r_1)}$ and $A^{(r_2)}$ are not homothetic.

Now,  from  (a) it follows that for $\beta\in (0,1)$,
\[ A^{(1)}\subseteq (1-\beta) A^{(1)} +\beta A^{(p)} \subseteq A^{(p)}. \]
Meanwhile, by (\ref{provedkeyineq1}) and (\ref{provedkeyineq2}), we
have
\[ F\left( (1-\alpha_1) A_0+\alpha_1 A_1 \right)= F\left( (1-\alpha_1)\cdot_p A_0 +_p \alpha_1\cdot_p A_1 \right), \]
i.e.,
\[ F\left(A^{(1)}\right)= F\left(A^{(p)}\right). \]
Thus, by the monotonicity of $F$, we obtain
\begin{align*}
F\left( (1-\beta) A^{(1)} +\beta A^{(p)}\right)
&=F\left(A^{(1)}\right)\\
&= F\left(A^{(p)}\right)\\
&=(1-\beta) F\left( A^{(1)} \right)+\beta F\left(A^{(p)} \right).
\end{align*}
Hence, from assertion (1),  $A^{(1)}$ and $A^{(p)}$ are homothetic.
However, this contradicts  (b).
\end{proof}

\begin{example}
The  implication ¡°``$(2)\Rightarrow (1)$" ¡±stated in Theorem
\ref{theorem3.3} does not always hold. This is demonstrated by the
following example, which deals with mean width of convex bodies.

Let $n\ge 2$ and $r<1$,  $r\neq 0$. Define $F: \mathcal{K}^n\to
[0,\infty)$ by
\[F(K) = {\left( {\int_{{S^{n - 1}}} {{w_K}{{(u)}^{ r}}d{\mathcal{H}^{n - 1}}(u)} } \right)^{ \frac{1}{r}}},\]
where $w_K(u) = h_K(u)+h_K(-u)$,  for $ u\in S^{n-1}$,  is the width
function of $K$.

It is obvious  that  $F$ is translation invariant and positively
homogeneous. Minkowski's integral inequality (see \cite[Theorem
198]{BookHardyLittlewoodPolya}) directly yields
\[F\left( {(1 - \alpha )K + \alpha L} \right) \ge (1 - \alpha )F(K) + \alpha F(L), \quad {\rm for}\; \alpha\in (0,1),\]
with equality if and only if  $w_K=\lambda w_L$ for some constant
$\lambda>0$. Note that this may hold without $K$ and $L$ being
homothetic, for example if $K=B^n$ and $L$ is a non-spherical convex
body of the same constant width.

Thus, by the $L_{p}$ transference principle, we obtain
\begin{equation}\label{LpwidthBMineq}
F{((1 - \alpha ){ \cdot _p}K{ + _p}\alpha { \cdot _p}L)^p} \ge (1 - \alpha )F{(K)^p} + \alpha F{(L)^p},
\end{equation}
for any $K,L\in\mathcal{K}^n_o$, $\alpha\in (0,1)$, and $p\in (1,\infty)$.

Finally, we prove that equality holds in (\ref{LpwidthBMineq}) if
and only if $K$ and $L$ are dilates. By Theorem \ref{theorem3.2},
it suffices to prove that $F$ is strictly increasing.

Note that $w_K=h_{K-K}$, for  $K\in\mathcal{K}^n_o$, and
$(K-K)\subset (L-L)$, for any $L\in\mathcal{K}^n_o$ containing $K$.
Hence, if $K \subsetneq L$, then from  continuity of support
functions, there is a nonempty open subset $U\subseteq S^{n-1}$,
such that $w_K(u)< w_L(u)$,  for all $u\in U$. Thus, $F(K)< F(L)$,
for $K,L\in\mathcal{K}^n_o$ with $K\subsetneq L$. That is, the
functional $F$ is strictly increasing on $\mathcal{K}^n_o$.

Hence, the functional $F$ has the required properties.
\end{example}

\vskip 25pt
\section{\bf Applications of the $\boldsymbol{L_p}$ transference principle}
\vskip 10pt

In this section, we aim to demonstrate the effectiveness and
practicability of the $L_p$ transference principle. As
illustrations, several new   $L_{p}$ Brunn-Minkowski  type
inequalities are established.

\subsection{An application to mixed volumes}

The mixed volume $V: \mathcal{K}^n \to [0,\infty)$ is a nonnegative
and symmetric functional  such that
\[ V_n\left( \lambda_1 K_1 +\cdots +\lambda_m K_m \right)=\sum_{i_1,\cdots,i_n=1}^m V(K_{i_1},\ldots,K_{i_m})\lambda_{i_1}\cdots\lambda_{i_m}, \] for
$K_1,\ldots,K_m\in\mathcal{K}^n$ and $\lambda_1,\ldots,\lambda_m>0$.
See, e.g., Schneider \cite[Chapter 5]{BookSchneider}.

Write $V(K,j;K_{j+1},\ldots,K)$ for mixed volume
$V(K,\cdots,K,K_{j+1},\ldots,K_n)$ with $j$ copies of $K$. If
$j=n$, then $V(K,j;K_{j+1},\ldots,K_n)$ is $V_n(K)$. The
classical Brunn-Minkowski inequality has a natural extension to
mixed volumes as follows (see, e.g., Schneider \cite[Theorems 7.4.5,
7.4.6 and 7.6.9]{BookSchneider}).

\begin{proposition}\label{lemma4.1}
Suppose  $K, L, K_j,\ldots, K_n\in\mathcal{K}^n$, and $j\in
\{2,\ldots, n\}$. Then
\[V{(K + L,j;{K_{j + 1}}, \ldots ,{K_n})^{\frac{1}{j}}} \ge V{(K,j;{K_{j + 1}}, \ldots ,{K_n})^{\frac{1}{j}}} + V{(L,j;{K_{j + 1}}, \ldots ,{K_n})^{\frac{1}{j}}}.\]
If $j=n$, or if $2 \le j\le n-1$ and $K_{j+1},\ldots, K_n$ are
smooth, then equality holds if and only if $K$ and $L$ are homothetic.
\end{proposition}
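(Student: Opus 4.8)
The plan is to derive the inequality from the Aleksandrov--Fenchel inequality via the binomial theorem. Write $\mathcal{C}=(K_{j+1},\ldots,K_n)$ and set
\[ a_i=V(K,i;L,j-i;\mathcal{C}),\qquad i=0,1,\ldots,j, \]
so that $a_j=V(K,j;\mathcal{C})$ and $a_0=V(L,j;\mathcal{C})$. First I would dispose of the degenerate cases: if $a_0=0$ or $a_j=0$, the asserted inequality follows at once from the monotonicity and translation invariance of the mixed volume, since a translate of $K$ is contained in $K+L$. Hence I may assume $a_0>0$ and $a_j>0$.

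For the main case, expand by the multilinearity and symmetry of the mixed volume to get
\[ V(K+L,j;\mathcal{C})=\sum_{i=0}^{j}\binom{j}{i}a_i. \]
The key input is the Aleksandrov--Fenchel inequality (see \cite[Section 7.3]{BookSchneider}), which, applied with the two varying bodies taken to be $K$ and $L$, yields $a_i^2\ge a_{i-1}a_{i+1}$ for $1\le i\le j-1$; that is, the finite sequence $(a_i)_{i=0}^{j}$ is log-concave. Since $a_0,a_j>0$ and the support of a log-concave sequence is an interval, all $a_i$ are positive and $\log a_i\ge(1-i/j)\log a_0+(i/j)\log a_j$, i.e.
\[ a_i\ge a_0^{\,1-i/j}a_j^{\,i/j}=\left(a_0^{1/j}\right)^{\,j-i}\left(a_j^{1/j}\right)^{\,i}. \]
Substituting into the expansion and applying the binomial theorem,
\[ V(K+L,j;\mathcal{C})\ge\sum_{i=0}^{j}\binom{j}{i}\left(a_0^{1/j}\right)^{\,j-i}\left(a_j^{1/j}\right)^{\,i}=\left(a_0^{1/j}+a_j^{1/j}\right)^{j}, \]
and taking $j$-th roots gives the claimed inequality.

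For the equality conditions I would argue as follows. Equality throughout forces $a_i=a_0^{\,1-i/j}a_j^{\,i/j}$ for every $i$, so $(a_i)$ is a genuine geometric progression and equality holds in each Aleksandrov--Fenchel inequality $a_i^2\ge a_{i-1}a_{i+1}$. When $j=n$ the list $\mathcal{C}$ is empty and the statement is precisely the classical Brunn--Minkowski inequality \eqref{classicalBMineq}, whose equality case is homothety. When $2\le j\le n-1$ and $K_{j+1},\ldots,K_n$ are smooth, I would invoke the equality characterization of the Aleksandrov--Fenchel inequality available under this smoothness hypothesis (\cite[Theorems 7.4.5, 7.4.6 and 7.6.9]{BookSchneider}), applied for instance to the instance $V(K,j-1;L;\mathcal{C})^2=V(K,j;\mathcal{C})\,V(K,j-2;L,2;\mathcal{C})$, to conclude that $K$ and $L$ are homothetic. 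Conversely, if $K$ and $L$ are homothetic then each $a_i$ equals $a_0^{\,1-i/j}a_j^{\,i/j}$ exactly and equality holds in the final inequality.

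The main obstacle is exactly this last step: the equality case of the Aleksandrov--Fenchel inequality is, in full generality, a famously open problem, which is why the smoothness hypothesis on $K_{j+1},\ldots,K_n$ cannot be removed by this method and why the precise form of Schneider's partial results must be quoted rather than proved here. A secondary point requiring care is the bookkeeping for the degenerate cases together with the verification that log-concavity propagates positivity of $a_0$ and $a_j$ to all intermediate $a_i$; both are routine but should be stated explicitly.
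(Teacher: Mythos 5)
The paper offers no proof of this proposition at all: it is quoted from Schneider \cite[Theorems 7.4.5, 7.4.6 and 7.6.9]{BookSchneider}. Your reconstruction of the inequality is the standard Aleksandrov--Fenchel route and is correct as far as the inequality goes: the multilinear expansion $V(K+L,j;\mathcal{C})=\sum_{i}\binom{j}{i}a_i$, the relations $a_i^2\ge a_{i-1}a_{i+1}$, the interpolation $a_i\ge a_0^{(j-i)/j}a_j^{i/j}$, and the binomial theorem. One small repair: your claim that the support of a sequence satisfying $a_i^2\ge a_{i-1}a_{i+1}$ must be an interval is false (take $1,0,0,1$), so positivity of the intermediate $a_i$ does not follow from positivity of $a_0$ and $a_j$ by that route. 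Fortunately the whole degenerate discussion is unnecessary: the paper's $\mathcal{K}^n$ consists of bodies with nonempty interior, so every mixed volume $a_i$ is strictly positive by Schneider's Theorem 5.1.8 (a fact the paper itself invokes in the proof of its Theorem~\ref{theorem4.1}). State that and delete the case analysis.

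The genuine gap is in the equality case for $2\le j\le n-1$. The instance you propose to use, $V(K,j-1;L;\mathcal{C})^2=V(K,j;\mathcal{C})\,V(K,j-2;L,2;\mathcal{C})$, is an Aleksandrov--Fenchel equality whose \emph{fixed} entries are $j-2$ copies of $K$ together with $K_{j+1},\ldots,K_n$. The known characterizations of equality in Aleksandrov--Fenchel require all fixed entries to be smooth, and $K$ is an arbitrary body; so for $j\ge 3$ the smoothness hypothesis on $K_{j+1},\ldots,K_n$ alone does not license that step (for $j=2$ it does, since then no copies of $K$ or $L$ are fixed). The correct move --- consistent with your own observation that the general equality case is open --- is to cite Schneider's Theorem 7.6.9 directly, which characterizes equality in precisely this mixed-volume Brunn--Minkowski inequality under smoothness of $K_{j+1},\ldots,K_n$, rather than trying to manufacture it from the quadratic case. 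With that citation your treatment of the equality case collapses to the paper's own, which is a bare citation; your added value is the self-contained proof of the inequality itself.
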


From Proposition \ref{lemma4.1} and the $L_p$ transference
principle, we  obtain the following result.

\begin{theorem}\label{theorem4.1}
Suppose $K, L\in\mathcal{K}^n_o$,  $K_j,\ldots, K_n\in\mathcal{K}^n$,
 $p\in (1,\infty)$, and $j\in \{2,\ldots, n\}$. Then
\[V{(K{ + _p}L,j;{K_{j + 1}}, \ldots ,{K_n})^{\frac{p}{j}}} \ge V{(K,j;{K_{j + 1}}, \ldots ,{K_n})^{\frac{p}{j}}} + V{(L,j;{K_{j + 1}}, \ldots ,{K_n})^{\frac{p}{j}}}.\]
If $j=n$,  or if $2 \le j\le n-1$ and $K_{j+1},\ldots, K_n$ are
smooth, then equality holds if and only if $K$ and $L$ are dilates.
\end{theorem}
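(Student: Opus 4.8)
The plan is to apply Corollary~3.1 to the functional
\[
F:\mathcal{K}^n\to[0,\infty),\qquad F(M)=V(M,j;K_{j+1},\ldots,K_n)^{\frac 1j},
\]
with $K_{j+1},\ldots,K_n$ regarded as fixed parameters. Three hypotheses must be checked. Positive homogeneity is immediate from the multilinearity of mixed volume, since $V(\alpha M,j;K_{j+1},\ldots,K_n)=\alpha^{j}V(M,j;K_{j+1},\ldots,K_n)$. The increasing property is the standard monotonicity of mixed volumes with respect to set inclusion. Concavity follows from Proposition~4.1 together with positive homogeneity: for $\alpha\in(0,1)$,
\[
F((1-\alpha)K+\alpha L)\ge F((1-\alpha)K)+F(\alpha L)=(1-\alpha)F(K)+\alpha F(L).
\]
Corollary~3.1 then gives $F(K+_pL)^p\ge F(K)^p+F(L)^p$, which is precisely the claimed inequality; moreover Theorem~3.1 tells us that $F$ is $p$-concave, a fact used below.

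For the equality assertion I restrict to the case $j=n$, or to $2\le j\le n-1$ with $K_{j+1},\ldots,K_n$ smooth, so that the equality characterization in Proposition~4.1 is available. Because mixed volume is translation invariant in each argument, $F$ is translation invariant, so Theorem~3.3 applies once its hypothesis (1) is verified. That verification is short: if $F_{1;K,L}$ is affine, then evaluating at $\alpha=\tfrac12$ and using homogeneity gives $F(K+L)=F(K)+F(L)$, i.e.\ equality in Proposition~4.1, whence $K$ and $L$ are homothetic; conversely, if $K=\lambda L+x$, then translation invariance and homogeneity make $\alpha\mapsto F((1-\alpha)K+\alpha L)$ affine. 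Hence Theorem~3.3 yields its conclusion (2): for $K,L\in\mathcal{K}^n_o$, the function $F_{p;K,L}$ is affine if and only if $K$ and $L$ are dilates.

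It remains to transport conclusion (2) to the statement about $K+_pL$. Fix $\alpha\in(0,1)$ and set $A_0=(1-\alpha)^{-1/p}K$, $A_1=\alpha^{-1/p}L$; a direct computation with support functions gives $(1-\alpha)\cdot_p A_0+_p\alpha\cdot_p A_1=K+_pL$ and $(1-\alpha)F(A_0)^p+\alpha F(A_1)^p=F(K)^p+F(L)^p$. Thus equality in Theorem~4.1 means $F_{p;A_0,A_1}(\alpha)=(1-\alpha)F_{p;A_0,A_1}(0)+\alpha F_{p;A_0,A_1}(1)$. Since $F$ is $p$-concave, $F_{p;A_0,A_1}$ is concave by Lemma~2.1, and a concave function on $[0,1]$ meeting its chord at an interior point is affine; so conclusion (2) of Theorem~3.3 forces $A_0$ and $A_1$, and therefore $K$ and $L$, to be dilates. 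The converse (dilates $\Rightarrow$ equality) is the short computation already carried out at the end of the proof of Theorem~3.1.

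The inequality itself is a routine invocation of the transference principle; the substance lies in the equality part, and there the delicate point is matching Proposition~4.1's equality characterization with hypothesis (1) of Theorem~3.3 and then pulling conclusion (2) back to $K+_pL$ via the ``concave function meeting its chord'' argument. One could instead attempt to verify directly that $F$ is strictly increasing on $\mathcal{K}^n_o$ and invoke the equality clause of Theorem~3.1, but for $2\le j\le n-1$ strict monotonicity of $V(\,\cdot\,,j;K_{j+1},\ldots,K_n)$ is exactly where the smoothness hypothesis enters and is itself nontrivial, so the route through Theorem~3.3 is cleaner.
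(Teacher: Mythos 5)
Your proof is correct and follows essentially the same route as the paper: apply the $L_p$ transference principle (Theorem 3.1 / Corollary 3.1) to the functional $F=V(\,\cdot\,,j;K_{j+1},\ldots,K_n)^{1/j}$ for the inequality, and combine Proposition 4.1 with the translation invariance of $F$ and Theorem 3.3 for the equality characterization. The paper compresses the equality step into one sentence; your verification of hypothesis (1) of Theorem 3.3 and the concave-function-meeting-its-chord argument transporting its conclusion to $K+_pL$ are precisely the details it leaves implicit.
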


If $j=n$, then the previous inequality reduces to (\ref{LpBMineq}).
If
$1\le j\le n-1$ and $K_{j+1}=\cdots=K_n=B^n$, then
the previous inequality becomes (\ref{LpBMquermassintegrals}).

\begin{proof}
We only need consider the case $1\le j\le n-1$. For $K\in\mathcal{K}^n$,
define
\[F(K) = V{(K,j;{K_{j + 1}}, \ldots ,{K_n})^{\frac{1}{j}}}.\]
Then $F$ is positively homogeneous and increasing (see, e.g.,
Schneider \cite[(5.25), p. 282]{BookSchneider}). Since convex bodies
are $n$-dimensional, from Theorem 5.1.8 of Schneider
\cite[p. 283]{BookSchneider}, $F$ is strictly positive. Meanwhile,
Proposition \ref{lemma4.1} implies that $F$ is concave.

Hence,  from the $L_{p}$ transference principle, it follows that
\begin{equation}\label{generalLpBMineq}
F\left((1-\alpha)\cdot_p K +_p \alpha \cdot_p L \right)^p \ge
(1-\alpha) F(K)^p +\alpha F(L)^p,
\end{equation}
for $K,L\in\mathcal{K}^n_o$ and $0<\alpha<1$.

Assume $2\le j \le n-1$, and the bodies $K_{j+1},\ldots, K_n$ are smooth.
Note that $F$ is translation invariant. Thus, by Proposition
\ref{lemma4.1} and Theorem \ref{theorem3.3}, equality holds in
(\ref{generalLpBMineq}) if and only if $K$ and $L$ are dilates.
\end{proof}

\subsection{An application to moments of inertia}

From classic mechanics, we know that for each convex body $K$ in
$\mathbb{R}^n$, its moment of inertia, $I(K)$,  is defined by
\[ I(K)=\int_{K}|x-c_K|^2dx, \]
where $c_K$ denotes the centroid of $K$.

Proposition \ref{lemma4.2} was originally established by Hadwiger
\cite{BMineqHadwiger}.
\begin{proposition}\label{lemma4.2}
Suppose $K,L\in\mathcal{K}^n$. Then
\[ I(K+L)^{\frac{1}{n+2}} \ge I(K)^{\frac{1}{n+2}} + I(L)^{\frac{1}{n+2}}. \]
\end{proposition}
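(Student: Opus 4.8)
\smallskip

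The inequality is classical, due to Hadwiger; the approach I would take is to reduce it first to a concavity statement and then to a Brunn--Minkowski inequality in higher dimension. The change of variables $x\mapsto\lambda x$ together with the translation invariance of $I$ (which follows from $c_{K+v}=c_K+v$) gives $I(\lambda K)=\lambda^{n+2}I(K)$ for all $\lambda>0$, so the displayed superadditivity is equivalent to the concavity of the functional $I^{1/(n+2)}\colon\mathcal K^n\to[0,\infty)$: applying the superadditive form to $(1-\alpha)K$ and $\alpha L$ and rescaling yields $I((1-\alpha)K+\alpha L)^{1/(n+2)}\ge(1-\alpha)I(K)^{1/(n+2)}+\alpha I(L)^{1/(n+2)}$, and the reverse implication is the same computation read backwards. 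Hence it suffices to prove that $I^{1/(n+2)}$ is concave. (For the later use of the $L_p$ transference principle it is also worth recording here that $I^{1/(n+2)}$ is positively homogeneous of degree one and increasing; the monotonicity is clear from the representation $I(K)=\tfrac{n}{|S^{n-1}|}\int_{S^{n-1}}J_u(K)\,d\sigma(u)$ with $J_u(K)=\min_{s\in\mathbb R}\int_{\mathbb R}(t-s)^2V_{n-1}\!\left(K\cap\{x\cdot u=t\}\right)dt$, since $K\subseteq L$ makes every section area no larger for $K$ than for $L$, and a pointwise inequality between nonnegative integrands is preserved on taking the minimum over $s$.)

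The concavity is the heart of the matter, and this is the step I expect to be the crux. The natural idea is to realise a fixed multiple of $I(K)$ as an $(n+2)$-dimensional volume and then invoke the classical Brunn--Minkowski inequality in $\mathbb R^{n+2}$: for instance $\pi\,I(K)=V_{n+2}(E_K)$, where $E_K=\{(x,\xi)\in(K-c_K)\times\mathbb R^2:\ |\xi|\le|x-c_K|\}$. To conclude this way one would need the inclusion $E_K+E_L\subseteq E_{K+L}$; but in fact only the reverse, $E_{K+L}\subseteq E_K+E_L$, holds — because $|x+y|\le|x|+|y|$ means precisely that the ``outer'' mass, which the second moment weights most heavily, is contracted under Minkowski addition — and so the Brunn--Minkowski step comes out with its inequality reversed. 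The same sign trouble defeats the other quick attempts (averaging the one-dimensional inequalities for the $J_u$ by Minkowski's integral inequality; or working from the identity $I(K)=\tfrac1{2V_n(K)}\int_K\int_K|x-y|^2\,dx\,dy$). The resolution — the substance of Hadwiger's argument, which I would reconstruct — is a more careful set-up: after centering both bodies at their centroids one builds auxiliary convex bodies in $\mathbb R^{n+2}$ for which the favourable inclusion does hold, applies Brunn--Minkowski to those, and then reassembles the exponent $\tfrac1{n+2}$ with the aid of the volume Brunn--Minkowski inequality $V_n(K+L)^{1/n}\ge V_n(K)^{1/n}+V_n(L)^{1/n}$ and elementary convexity estimates.

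To summarise the plan: (i) homogeneity reduces the problem to concavity of $I^{1/(n+2)}$; (ii) concavity is obtained from Brunn--Minkowski applied to suitable higher-dimensional bodies, the one genuinely delicate point — the main obstacle — being the correct design of those bodies so that the relevant set inclusion points the favourable way, since the naive companion body gives it the wrong way; (iii) the remaining combination with the volume Brunn--Minkowski inequality is routine. No equality discussion is needed, since the proposition asserts only the inequality.
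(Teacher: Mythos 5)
The paper offers no proof of this proposition at all: it is quoted as a known theorem of Hadwiger \cite{BMineqHadwiger}, so there is no internal argument to measure yours against. Judged on its own terms, your proposal has a genuine gap, and it sits exactly where all the work is. Your step (i) is fine: the homogeneity $I(\lambda K)=\lambda^{n+2}I(K)$ and translation invariance do make the stated superadditivity equivalent to concavity of $I^{1/(n+2)}$, and your parenthetical monotonicity argument via $J_u(K)=\min_s\int(t-s)^2V_{n-1}(K\cap\{x\cdot u=t\})\,dt$ is sound (and is genuinely useful later, since Theorem 4.2 needs $I^{1/(n+2)}$ to be increasing). Your diagnosis in step (ii) is also accurate: the companion set $E_K$ with $\pi I(K)=V_{n+2}(E_K)$ is not convex and satisfies only $E_{K+L}\subseteq E_K+E_L$, so Brunn--Minkowski in $\mathbb{R}^{n+2}$ comes out with the wrong sign, and the other shortcuts you list do fail for the reasons you give.

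But having correctly eliminated every concrete construction you name, you then write only that one ``builds auxiliary convex bodies in $\mathbb{R}^{n+2}$ for which the favourable inclusion does hold'' and reassembles the exponent afterwards --- without saying what those bodies are or why the inclusion holds for them. That single unexecuted sentence \emph{is} the theorem; everything you actually carry out is either routine (the homogeneity reduction) or a demonstration that the obvious attacks fail. As it stands the proposal is a well-drawn map of the obstacles, not a proof. To close it you must either exhibit Hadwiger's construction explicitly, or replace step (ii) by a different complete argument (for instance a Pr\'ekopa--Leindler/Borell--Brascamp--Lieb argument applied to the $\frac{1}{n-1}$-concave section functions $t\mapsto V_{n-1}(K\cap\{x\cdot u=t\})$, which does yield a one-dimensional inequality for the $J_u$ that can then be averaged); simply citing \cite{BMineqHadwiger}, as the paper does, would also be legitimate, but that is not what the proposal purports to do.
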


From Proposition \ref{lemma4.2} and the $L_p$ transference
principle, we obtain the following result.
\begin{theorem}\label{theorem4.2}
Suppose that  $K, L\in\mathcal{K}^n_o$  are origin-symmetric and
$1<p<\infty$. Then
\begin{equation}\label{LpBMineqDualVolume}
I(K+_pL)^{\frac{p}{n+2}} \ge I(K)^{\frac{p}{n+2}} +
I(L)^{\frac{p}{n+2}},
\end{equation}
with equality if and only if $K$ and $L$ are dilates.
\end{theorem}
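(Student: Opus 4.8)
The plan is to apply the $L_p$ transference principle (Theorem~\ref{theorem3.1}) to the functional
\[
F(K) = I(K)^{\frac{1}{n+2}}, \qquad K \in \mathcal{K}^n,
\]
restricted to origin-symmetric convex bodies, where the centroid is the origin and hence $I(K) = \int_K |x|^2\,dx$. First I would check the three structural hypotheses. Positive homogeneity of order $1$ is immediate: the substitution $x = \lambda y$ gives $I(\lambda K) = \lambda^{n+2} I(K)$, so $F(\lambda K) = \lambda F(K)$. Monotonicity: if $K \subseteq L$ with both origin-symmetric, then since the integrand $|x|^2$ is nonnegative and the centroid of each is the origin, $I(K) = \int_K |x|^2 \le \int_L |x|^2 = I(L)$, so $F(K) \le F(L)$. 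Strict positivity holds since $K$ has nonempty interior, so $I(K) > 0$. Concavity of $F$ on this class is exactly Proposition~\ref{lemma4.2}: $I((1-\alpha)K + \alpha L)^{1/(n+2)} \ge (1-\alpha) I(K)^{1/(n+2)} + \alpha I(L)^{1/(n+2)}$ (apply it with $K, L$ replaced by $(1-\alpha)K$, $\alpha L$ and use homogeneity). With these verified, Corollary~\ref{cor3.1} gives inequality~(\ref{LpBMineqDualVolume}) directly, provided one notes that $K +_p L$ is again origin-symmetric when $K$ and $L$ are (its support function satisfies $h_{K+_pL}(-u)^p = h_K(-u)^p + h_L(-u)^p = h_K(u)^p + h_L(u)^p = h_{K+_pL}(u)^p$), so the functional stays within the class on which $F$ has the required properties.

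The one genuine subtlety — and the step I expect to be the main obstacle — is the equality characterization. Corollary~\ref{cor3.1} yields "equality iff $K$ and $L$ are dilates" only once we know $F$ is \emph{strictly} increasing on the relevant class. So I would prove: if $K \subsetneq L$ are origin-symmetric, then $I(K) < I(L)$. The strict inclusion means $h_K(u_0) < h_L(u_0)$ for some $u_0$, hence by continuity of support functions there is an open geodesic ball $U \subseteq S^{n-1}$ on which $h_K < h_L$; consequently $L \setminus K$ contains a set of positive Lebesgue measure (a small "cap" region near the boundary points in directions in $U$). Since $|x|^2 > 0$ except at the origin, $\int_{L \setminus K} |x|^2\,dx > 0$, giving $I(L) - I(K) = \int_{L\setminus K}|x|^2\,dx > 0$. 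Here I am using crucially that the centroids of both bodies are the origin, so that both $I(K)$ and $I(L)$ are the second moment about the \emph{same} point and the difference is a genuine integral over $L \setminus K$; this is why the origin-symmetry hypothesis (or at least a common-centroid hypothesis) cannot be dropped. With strict monotonicity established, Corollary~\ref{cor3.1} gives equality in~(\ref{LpBMineqDualVolume}) if and only if $K$ and $L$ are dilates, completing the proof.

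One bookkeeping point to address explicitly: Corollary~\ref{cor3.1} as stated concerns functionals on all of $\mathcal{K}^n$, whereas our $F$ has the monotonicity and concavity properties only on the subclass of origin-symmetric bodies. This is harmless because the entire proof of Theorem~\ref{theorem3.1} (and hence the corollary) only ever evaluates $F$ on $L_p$- and Minkowski-combinations of $K$ and $L$ and on their dilates, all of which remain origin-symmetric when $K$ and $L$ are. So I would phrase the argument by re-running the short chain of inclusions and inequalities from the proof of Corollary~\ref{cor3.1} verbatim, observing at each step that every body involved lies in the class where $F$ is homogeneous, increasing, strictly positive, and concave — rather than invoking the corollary as a black box over the wrong domain.
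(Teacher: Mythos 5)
Your proposal is correct and follows essentially the same route as the paper: define $F=I^{1/(n+2)}$, obtain concavity from Proposition~\ref{lemma4.2}, observe that on origin-symmetric bodies $I$ is the second moment about the origin and hence strictly increasing, and invoke the $L_p$ transference principle. Your explicit handling of the domain restriction to $\mathcal{K}^n_{o,s}$ and your detailed strict-monotonicity argument are in fact more careful than the paper's own very terse proof, but the underlying argument is identical.
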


\begin{proof}
For $K\in\mathcal{K}^n$, define
\[ F(K)=I(K)^{\frac{1}{n+2}}. \]
Obviously, $F$ is positively homogeneous. From Proposition
\ref{lemma4.2},
 $F$ is concave. Moreover, if $K$ is
origin-symmetric, then the centroid $c_K$ of $K$ is at the origin,
and then
\[ F(K)=\left( \int_K |x|^2dx \right)^{\frac{1}{n+2}}. \]

When the domain of $F$ is restricted to the subset
$\mathcal{K}^n_{o,s}\subseteq \mathcal{K}^n_o$, the class of
origin-symmetric convex bodies,  then $F:\mathcal{K}^n_{o,s}\to
(0,\infty)$ is strictly increasing.

Hence,  from the $L_{p}$ transference principle, we obtain the theorem.
\end{proof}

For an origin-symmetric convex body $K$,  its isotropic constant
$L_K$ is defined by
\begin{equation}\label{defisotropicconst}
{L_K}^2 = \frac{1}{n}\min \left\{
{\frac{{I(TK)}}{{{V_n}{{(K)}^{\frac{{n + 2}}{n}}}}}:T \in
{\rm{SL}}(n)} \right\}.
\end{equation}
For more information on isotropic constants, we refer to Milman and
Pajor \cite{Milman}.

From (\ref{LpBMineqDualVolume}) and (\ref{defisotropicconst}), we
obtain the following result.

\begin{corollary}
Suppose that  $K_0,K_1\in\mathcal{K}^n_o$ are origin-symmetric, and
$1\le p<\infty$. Then
\[V_n{\left( {{K_0}{ + _p}{K_1}} \right)^{\frac{p}{{n}}}}{L_{{K_0}{ + _p}{K_1}}}^{\frac{2p}{{n + 2}}} \ge V_n{\left( {{K_0}} \right)^{\frac{p}{{n}}}}{L_{{K_0}}}^{\frac{2p}{{n + 2}}} + V_n{\left( {{K_1}} \right)^{\frac{p}{{n}}}}{L_{{K_1}}}^{\frac{2p}{{n + 2}}}.\]
\end{corollary}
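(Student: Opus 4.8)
The plan is to combine the $L_p$ Brunn-Minkowski inequality for moments of inertia, namely \eqref{LpBMineqDualVolume}, with the definition \eqref{defisotropicconst} of the isotropic constant. The key observation is that \eqref{defisotropicconst} can be rewritten as
\[
I(K) = n\, {L_K}^2\, V_n(K)^{\frac{n+2}{n}}
\]
for every origin-symmetric convex body $K$, since the minimum in \eqref{defisotropicconst} is attained (by a compactness argument over $\mathrm{SL}(n)$, or simply by appealing to the standard theory of isotropic position) and $I$ is $\mathrm{SL}(n)$-invariant at the minimizing position in the sense that $I(TK)/V_n(K)^{(n+2)/n}$ achieves the value $n{L_K}^2$. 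Substituting this identity for $K = K_0$, $K = K_1$, and $K = K_0 +_p K_1$ into \eqref{LpBMineqDualVolume} and raising both sides to the power $1/(n+2)$ — or rather, substituting directly into the form of \eqref{LpBMineqDualVolume} — yields
\[
n^{\frac{p}{n+2}} {L_{K_0 +_p K_1}}^{\frac{2p}{n+2}} V_n(K_0 +_p K_1)^{\frac{p}{n}} \ge n^{\frac{p}{n+2}} {L_{K_0}}^{\frac{2p}{n+2}} V_n(K_0)^{\frac{p}{n}} + n^{\frac{p}{n+2}} {L_{K_1}}^{\frac{2p}{n+2}} V_n(K_1)^{\frac{p}{n}},
\]
after which the common factor $n^{p/(n+2)}$ cancels and one obtains the claimed inequality.

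First I would record the identity $I(K) = n{L_K}^2 V_n(K)^{(n+2)/n}$ carefully, noting that it requires the minimum in \eqref{defisotropicconst} to be achieved; this is where one must invoke that the isotropic position exists (equivalently, that the continuous function $T \mapsto I(TK)$ on the closed — after quotienting by $\mathrm{SO}(n)$ — relevant set attains its infimum). Second, I would observe that both $K_0 +_p K_1$ and the individual bodies are origin-symmetric: the $L_p$ sum of two origin-symmetric bodies is origin-symmetric, since $h_{K_0 +_p K_1}(-u)^p = h_{K_0}(-u)^p + h_{K_1}(-u)^p = h_{K_0}(u)^p + h_{K_1}(u)^p = h_{K_0 +_p K_1}(u)^p$, so the isotropic constant $L_{K_0 +_p K_1}$ is well-defined. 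Third, I would substitute the identity into Theorem \ref{theorem4.2} and simplify.

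The main subtlety — rather than a genuine obstacle — is handling the case $p = 1$, which is included in the statement but not in Theorem \ref{theorem4.2} (whose hypothesis is $1 < p < \infty$). For $p = 1$ one needs the classical Hadwiger inequality of Proposition \ref{lemma4.2} applied to origin-symmetric bodies, which gives $I(K_0 + K_1)^{1/(n+2)} \ge I(K_0)^{1/(n+2)} + I(K_1)^{1/(n+2)}$ directly; feeding this into the same substitution yields the $p=1$ case of the corollary. So the proof splits as: invoke Theorem \ref{theorem4.2} for $1 < p < \infty$ and Proposition \ref{lemma4.2} for $p = 1$, then in both cases substitute $I(K) = n{L_K}^2 V_n(K)^{(n+2)/n}$ and cancel the factor $n^{p/(n+2)}$. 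No equality characterization is claimed, so nothing further is needed there.
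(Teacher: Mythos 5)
The central step of your argument---the ``identity'' $I(K)=n{L_K}^2V_n(K)^{\frac{n+2}{n}}$ for every origin-symmetric $K$---is false. Definition \eqref{defisotropicconst} gives $n{L_K}^2V_n(K)^{\frac{n+2}{n}}=\min\{I(TK):T\in{\rm SL}(n)\}$, which is $\le I(K)$, with equality only when $K$ itself realizes the minimum (i.e.\ is in isotropic position). An ellipsoid $TB^n$ with $T\in{\rm SL}(n)$ non-orthogonal already violates it: $L_{TB^n}=L_{B^n}$ and $V_n(TB^n)=V_n(B^n)$, yet a direct computation shows $I(TB^n)=\frac{1}{n}\bigl(\sum_i a_i^2\bigr)I(B^n)>I(B^n)$ when the semi-axes $a_i$ are not all equal. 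This matters because the true inequality $I(K)\ge n{L_K}^2V_n(K)^{\frac{n+2}{n}}$ points the wrong way on the left-hand side of your substitution: from $I(K_0+_pK_1)^{\frac{p}{n+2}}\ge I(K_0)^{\frac{p}{n+2}}+I(K_1)^{\frac{p}{n+2}}$ you cannot conclude the same lower bound for $\bigl(\min_T I(T(K_0+_pK_1))\bigr)^{\frac{p}{n+2}}$, which is what the corollary requires.

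The repair is short but is a genuinely missing idea: choose $T_0\in{\rm SL}(n)$ attaining the minimum for $K_0+_pK_1$, note that $T(K_0+_pK_1)=TK_0+_pTK_1$ for any linear $T$ (since $h_{TK}(u)=h_K(T^tu)$), and apply Theorem \ref{theorem4.2} to the origin-symmetric bodies $T_0K_0$ and $T_0K_1$. Then
\[
n{L_{K_0+_pK_1}}^2V_n(K_0+_pK_1)^{\frac{n+2}{n}}=I(T_0K_0+_pT_0K_1),\qquad I(T_0K_i)\ge\min_T I(TK_i)=n{L_{K_i}}^2V_n(K_i)^{\frac{n+2}{n}},
\]
and the corollary follows after cancelling $n^{\frac{p}{n+2}}$. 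Your other points are sound: the $L_p$ sum of origin-symmetric bodies is origin-symmetric, and the case $p=1$ (not covered by Theorem \ref{theorem4.2}, whose hypothesis is $p>1$) does need Proposition \ref{lemma4.2} directly---a discrepancy in the paper's own statement that you correctly flagged. The paper offers no written proof of this corollary, only the phrase ``From \eqref{LpBMineqDualVolume} and \eqref{defisotropicconst} we obtain\dots'', so there is no argument to compare against; the minimizer-plus-linearity argument above is surely what is intended, and your version needs that ingredient to close.
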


\subsection{An application to affine quermassintegrals}

For a convex body $K\in\mathcal{K}^n$, Hadwiger \cite[p.
267]{BookHadwiger} introduced the harmonic quermassintegrals
$\hat{W}_0(K)$, $\hat{W}_1(K)$, $\ldots$, $\hat{W}_{n-1}(K)$,
defined by $\hat{W}_0(K)=V_n(K)$, and
\[ \hat{W}_j(K)=\frac{\omega_n}{\omega_{n-j}}\left(\int_{{\rm
G}_{n,n-j}}V_{n-j}(K|\xi)^{-1}d\mu_{n-j}(\xi)\right)^{-1}, \quad
j=1, \ldots, n-1.\]
See also Gardner \cite[p. 382]{BookGardner},
Schneider \cite[p. 514]{BookSchneider}, and Lutwak
\cite{BMLutwak1, BMLutwak3}.  Nearly thirty years later,
Lutwak \cite{BMLutwak1, BMLutwak3} introduced the affine
quermassintegrals $\Phi_0(K)$, $\Phi_1(K)$, $\ldots$,
$\Phi_{n-1}(K)$,  defined by $\Phi_0(K)=V_n(K)$, and
\[ \Phi_j(K)=\frac{\omega_n}{\omega_{n-j}}\left(\int_{{\rm G}_{n,n-j}}V_{n-j}(K|\xi)^{-n}d\mu_{n-j}(\xi)\right)^{-\frac{1}{n}}, \quad j=1,\ldots,n-1. \]
Note that all the  $\Phi_j(K)$ are affine invariant, i.e.,
$\Phi_j(TK)=\Phi_j(K)$,  for all $T \in {\rm{SL}}(n)$. See Grinberg
\cite{Grinberg}. For more information, we refer to Gardner
\cite{GadnerAffineQuermass} and Dafnis and Paouris
\cite{AffinequermassDafnisPaouris}.

Hadwiger \cite[p. 268]{BookHadwiger} and Lutwak \cite{BMLutwak1}
established the following Brunn-Minkowski type inequalities for
harmonic quermassintegrals and affine quermassintegrals,
respectively.

\begin{proposition}\label{lemma4.3}
Suppose  $K, L\in\mathcal{K}^n$ and $j\in \{1,\ldots,n-1\}$. Then
\[ \hat{W}_j(K+L)^{\frac{1}{n-j}}\ge \hat{W}_j(K)^{\frac{1}{n-j}}+ \hat{W}_j(L)^{\frac{1}{n-j}} \]
and
\[ \Phi_j(K+L)^{\frac{1}{n-j}}\ge \Phi_j(K)^{\frac{1}{n-j}}+ \Phi_j(L)^{\frac{1}{n-j}}.\]
If $j=n-1$,  equality holds in each inequality if and only if
$w_K=\lambda w_L$ for some constant $\lambda>0$. If $1\le j<n-1$,
equality holds in each inequality if and only if $K$ and $L$ are
homothetic.
\end{proposition}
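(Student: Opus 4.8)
The plan is to reduce both inequalities to one template, built from the classical Brunn--Minkowski inequality applied fibrewise to projections together with Minkowski's integral inequality for a negative exponent. For a positive $\mu_{n-j}$-measurable function $\varphi$ on ${\rm G}_{n,n-j}$ and $q>0$, put $\|\varphi\|_{-q}=\big(\int_{{\rm G}_{n,n-j}}\varphi^{-q}\,d\mu_{n-j}\big)^{-1/q}$, and set $\varphi_K(\xi)=V_{n-j}(K|\xi)^{1/(n-j)}$. Unwinding the definitions gives
\[
\hat{W}_j(K)^{\frac1{n-j}}=c_j\,\|\varphi_K\|_{-(n-j)},\qquad
\Phi_j(K)^{\frac1{n-j}}=c_j\,\|\varphi_K\|_{-n(n-j)},
\]
where $c_j=(\omega_n/\omega_{n-j})^{1/(n-j)}$. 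Hence it suffices to prove, for $q\in\{\,n-j,\ n(n-j)\,\}$, the superadditivity $\|\varphi_{K+L}\|_{-q}\ge\|\varphi_K\|_{-q}+\|\varphi_L\|_{-q}$.

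First I would use the identity $(K+L)|\xi=K|\xi+L|\xi$ inside each subspace $\xi\in{\rm G}_{n,n-j}$ together with the classical Brunn--Minkowski inequality~(\ref{classicalBMineq}) in the $(n-j)$-dimensional ambient space $\xi$ (the projections $K|\xi$ and $L|\xi$ are convex and genuinely $(n-j)$-dimensional, since convex bodies are $n$-dimensional and $n-j\le n-1$), obtaining the pointwise bound $\varphi_{K+L}(\xi)\ge\varphi_K(\xi)+\varphi_L(\xi)$. Since $t\mapsto t^{-q}$ is decreasing, the functional $\|\cdot\|_{-q}$ is monotone, so $\|\varphi_{K+L}\|_{-q}\ge\|\varphi_K+\varphi_L\|_{-q}$; and the reverse Minkowski inequality for the negative exponent $-q$ (valid for positive functions; see, e.g., \cite{BookHardyLittlewoodPolya}) gives $\|\varphi_K+\varphi_L\|_{-q}\ge\|\varphi_K\|_{-q}+\|\varphi_L\|_{-q}$. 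Multiplying through by $c_j$ produces both inequalities.

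Equality in this chain forces equality in each inequality used. If $j=n-1$, then $n-j=1$, the quantity $V_1(K|\xi)$ is the width of $K$ in the direction spanning $\xi$, and, widths being additive under Minkowski addition, the fibrewise step is automatically an identity; so equality holds exactly when $\varphi_K$ and $\varphi_L$ are proportional $\mu_1$-a.e., equivalently, by continuity of $u\mapsto w_K(u)$, when $w_K=\lambda w_L$ for some $\lambda>0$. If $1\le j<n-1$, then $n-j\ge2$: the reverse Minkowski equality forces $V_{n-j}(K|\xi)=\lambda^{\,n-j}V_{n-j}(L|\xi)$ for $\mu_{n-j}$-a.e.\ $\xi$ with a single constant $\lambda>0$, and equality in the fibrewise Brunn--Minkowski inequality forces $K|\xi$ and $L|\xi$ to be homothetic for $\mu_{n-j}$-a.e.\ $\xi$; comparing volumes, the homothety ratio equals $\lambda$ on a set of full measure. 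The main obstacle is the last step, namely deducing $K=\lambda L+t$ from the statement that $\mu_{n-j}$-a.e.\ $(n-j)$-dimensional projection of $K$ is a translate of $\lambda$ times the corresponding projection of $L$. I would phrase this through support functions: the condition says $g:=h_K-\lambda h_L$ restricts to a linear functional on $\xi$ for $\mu_{n-j}$-a.e.\ $\xi$; since $n-j\ge2$, a Fubini argument over the flag manifold upgrades this to ``$g|_\pi$ is linear for $\mu_2$-a.e.\ $2$-dimensional subspace $\pi$'', and then, since the set of $2$-planes on which the continuous function $g$ is linear is closed, to ``$g|_\pi$ is linear for every $2$-plane $\pi$''; this makes $g$ additive and $1$-homogeneous, hence linear, so $h_K=h_{\lambda L+t}$ for some $t\in\mathbb{R}^n$ and $K,L$ are homothetic. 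The converse is immediate: $\hat{W}_j$ and $\Phi_j$ are translation invariant and homogeneous of degree $n-j$, so $K=\lambda L+t$ turns both inequalities into equalities.
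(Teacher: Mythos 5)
The paper offers no proof of this Proposition; it is quoted from Hadwiger and Lutwak \cite{BookHadwiger,BMLutwak1}, and your argument is essentially the classical one from those sources: the pointwise Brunn--Minkowski inequality on projections combined with the reverse Minkowski inequality for negative exponents. Your treatment is correct, including the only genuinely delicate step (upgrading ``$h_K-\lambda h_L$ is linear on $\mu_{n-j}$-almost every $\xi$'' to global linearity), though you could shortcut the Fubini argument by noting that the set of $\xi\in{\rm G}_{n,n-j}$ on which $h_K-\lambda h_L$ is linear is already closed and of full measure, hence all of ${\rm G}_{n,n-j}$.
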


From Proposition \ref{lemma4.3} and the $L_p$ transference
principle, we  obtain the following result.

\begin{theorem}\label{theorem4.3}
Suppose $K,L\in\mathcal{K}^n_o$ and $1<p<\infty$. Then
\[ \hat{W}_j(K+_p L)^{\frac{p}{n-j}}\ge \hat{W}_j(K)^{\frac{p}{n-j}}+ \hat{W}_j(L)^{\frac{p}{n-j}}  \]
and
\[ \Phi_j(K+_p L)^{\frac{p}{n-j}}\ge \Phi_j(K)^{\frac{p}{n-j}}+ \Phi_j(L)^{\frac{p}{n-j}}. \]
Equality holds in each inequality if and only if $K$ and $L$ are
dilates.
\end{theorem}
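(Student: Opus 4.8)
The plan is to apply Corollary~\ref{cor3.1} separately to the two functionals $F=\hat{W}_j^{\,1/(n-j)}$ and $F=\Phi_j^{\,1/(n-j)}$; the reasoning is identical for both, so I describe it once. The first task is to verify that each such $F:\mathcal{K}^n\to[0,\infty)$ is positively homogeneous, increasing and concave. Positive homogeneity is read off from the definitions: since $V_{n-j}((\alpha K)|\xi)=\alpha^{n-j}V_{n-j}(K|\xi)$ for $\alpha>0$, we get $\hat{W}_j(\alpha K)=\alpha^{n-j}\hat{W}_j(K)$ and $\Phi_j(\alpha K)=\alpha^{n-j}\Phi_j(K)$, hence $F(\alpha K)=\alpha F(K)$. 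Monotonicity holds because $K\subseteq L$ forces $K|\xi\subseteq L|\xi$, so $V_{n-j}(K|\xi)\le V_{n-j}(L|\xi)$ for every $\xi\in{\rm G}_{n,n-j}$; replacing by reciprocals (or by $(-n)$-th powers), integrating over ${\rm G}_{n,n-j}$, and inverting once more preserves the inequality, giving $\hat{W}_j(K)\le\hat{W}_j(L)$ and $\Phi_j(K)\le\Phi_j(L)$. Concavity of $F$ is exactly Proposition~\ref{lemma4.3}. Finally, since a convex body is $n$-dimensional, each projection $K|\xi$ is a full-dimensional convex body in $\xi$, so $0<V_{n-j}(K|\xi)<\infty$ with bounds uniform in $\xi$; thus the integrals in the definitions are finite and positive, and $F$ is strictly positive.

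With these properties established, Corollary~\ref{cor3.1} immediately yields $F(K+_pL)^p\ge F(K)^p+F(L)^p$, which is precisely the two asserted inequalities after raising $\hat{W}_j$ and $\Phi_j$ to the power $p/(n-j)$. For the equality characterization, note that the equality clause of Corollary~\ref{cor3.1} applies as soon as $F$ is \emph{strictly} increasing on $\mathcal{K}^n_o$. I would establish this via Lemma~\ref{lemma2.3}, applied with $n-j$ in place of $j$ (legitimate since $j\in\{1,\dots,n-1\}$ implies $n-j\in\{1,\dots,n-1\}$): if $K\subsetneq L$, there is a set $G\subseteq{\rm G}_{n,n-j}$ with $\mu_{n-j}(G)>0$ on which $V_{n-j}(K|\xi)<V_{n-j}(L|\xi)$; hence $\int_{{\rm G}_{n,n-j}}V_{n-j}(K|\xi)^{-1}\,d\mu_{n-j}(\xi)>\int_{{\rm G}_{n,n-j}}V_{n-j}(L|\xi)^{-1}\,d\mu_{n-j}(\xi)$, and likewise for the $(-n)$-th powers, so $\hat{W}_j(K)<\hat{W}_j(L)$ and $\Phi_j(K)<\Phi_j(L)$. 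Thus both functionals $F$ are strictly increasing on $\mathcal{K}^n$, and Corollary~\ref{cor3.1} gives equality if and only if $K$ and $L$ are dilates.

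The one point requiring attention is that, in contrast to the mixed-volume case (Theorem~\ref{theorem4.1}), one cannot route the equality condition through Theorem~\ref{theorem3.3}: when $j=n-1$ the classical equality condition in Proposition~\ref{lemma4.3} is merely $w_K=\lambda w_L$, which is strictly weaker than homothety, so hypothesis~(1) of Theorem~\ref{theorem3.3} fails in that case. Arguing directly with strict monotonicity (equivalently, via Theorem~\ref{theorem3.2}) sidesteps this entirely. Beyond this bookkeeping and the care needed to apply Lemma~\ref{lemma2.3} to ${\rm G}_{n,n-j}$ rather than ${\rm G}_{n,j}$, I do not anticipate any real obstacle.
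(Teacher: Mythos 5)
Your proposal is correct and follows essentially the same route as the paper: verify that $F=\hat{W}_j^{1/(n-j)}$ and $F=\Phi_j^{1/(n-j)}$ are positively homogeneous and concave (via Proposition~\ref{lemma4.3}), obtain strict monotonicity from Lemma~\ref{lemma2.3}, and conclude via the $L_p$ transference principle with the strict-monotonicity equality characterization (Theorem~\ref{theorem3.2}/Corollary~\ref{cor3.1}). Your observation that the $j=n-1$ case blocks the Theorem~\ref{theorem3.3} route is a correct and worthwhile clarification of why the paper argues through strict monotonicity here.
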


\begin{proof}
We  prove this theorem for affine quermassintegrals. The proof for
harmonic quermassintegrals is similar. For $K\in\mathcal{K}^n$,
define
\[ F(K)=\Phi_j(K)^{\frac{1}{n-j}}.\]
Then  $F$ is positively homogeneous. From the definition of $\Phi_j$
and Lemma \ref{lemma2.3},  $F$ is strictly increasing. From
Proposition \ref{lemma4.3}, $F$ is concave. Hence, from the $L_{p}$
transference principle and Theorem \ref{theorem3.2}, we obtain the
theorem .
\end{proof}

\subsection{An application to projection bodies}

For a convex body $K\in\mathcal{K}^n$, its mixed projection bodies
$\Pi_{0} K$, $\Pi_1 K$, $\ldots$, $\Pi_{n-1}K$ are defined by
\[h_{\Pi_i K}(u)=W^{(n-1)}_i(K|u^\bot),\]
for $u\in S^{n-1}$ and $i\in\{0,1, \ldots, n-1\}$, where
$W^{(n-1)}_i(K|u^\bot)$ denotes the $i$th quermassintegral of
$K|u^\bot$ defined in the subspace $u^\bot$. For more information about
mixed projection bodies, we refer to Gardner \cite[p.
185]{BookGardner},  Lutwak \cite{BMLutwak2, BMLutwak4}, Parapatits
and Schuster \cite{Projectionbody}, and Schneider \cite[p.
578]{BookSchneider}.

In \cite{BMLutwak4}, Lutwak established the following
Brunn-Minkowski type inequality for projection bodies.

\begin{proposition}\label{lemma4.4}
Suppose $K,L\in\mathcal{K}^n$, $j\in\{1,\ldots,n\}$, and
$k\in\{1,\ldots,n-1\}$. Then
\[W_{n - j}{\left( {{\Pi _{n - 1 - k}}(K + L)} \right)^{\frac{1}{{jk}}}} \ge W_{n - j}{\left( {{\Pi _{n - 1 - k}}K} \right)^{\frac{1}{{jk}}}} + W_{n - j}{\left( {{\Pi _{n - 1 - k}}L} \right)^{\frac{1}{{jk}}}},\]
with equality if and only if $K$ and $L$ are homothetic.
\end{proposition}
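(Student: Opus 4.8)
The plan is to peel the composite functional $W_{n-j}\circ\Pi_{n-1-k}$ apart one layer at a time. First I would record two structural facts about $\Pi_{n-1-k}$. (i) Orthogonal projection onto a hyperplane commutes with Minkowski addition, so $(K+L)|u^{\bot}=K|u^{\bot}+L|u^{\bot}$ for every $u\in S^{n-1}$. (ii) Since $h_{\Pi_{n-1-k}M}(u)=W^{(n-1)}_{n-1-k}(M|u^{\bot})$, and $W^{(n-1)}_{n-1-k}$ is, up to a normalizing constant, the mixed volume in $u^{\bot}\cong\mathbb{R}^{n-1}$ with $k$ copies of the variable body and $n-1-k$ copies of the unit ball of $u^{\bot}$, the operator $\Pi_{n-1-k}$ is monotone with respect to inclusion and positively homogeneous of degree $k$; consequently $M\mapsto W_{n-j}(\Pi_{n-1-k}M)^{1/(jk)}$ is monotone and positively homogeneous of degree $1$.

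Next, applying the Brunn--Minkowski inequality for mixed volumes (Proposition \ref{lemma4.1}) inside the hyperplane $u^{\bot}$, together with (i), one obtains for each $u\in S^{n-1}$ the pointwise inequality
\[ h_{\Pi_{n-1-k}(K+L)}(u)^{1/k}\ \ge\ h_{\Pi_{n-1-k}K}(u)^{1/k}+h_{\Pi_{n-1-k}L}(u)^{1/k}, \]
with equality for a fixed $u$ precisely when $K|u^{\bot}$ and $L|u^{\bot}$ are homothetic in $u^{\bot}$.

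The substantive step is to upgrade this pointwise $L^{1/k}$--superadditivity of support functions to the asserted inequality for $W_{n-j}$ \emph{with the exponent $1/(jk)$}. One must be careful here. Bounding $h_{\Pi_{n-1-k}(K+L)}$ below by the support function of $\Pi_{n-1-k}K+_{p}\Pi_{n-1-k}L$ (legitimate for any $p\ge1$ by the power--mean inequality) and then combining monotonicity of $W_{n-j}$ with the Brunn--Minkowski inequality for $W_{n-j}$, or with Corollary \ref{cor3.1}, produces only the weaker exponent $1/j$; the factor $k$ is discarded. (When $k=1$ this already is the full inequality, since then $1/j=1/(jk)$, so the difficulty is genuine only for $k\ge2$.) To keep the factor $k$ I would exploit that $\Phi(M):=W_{n-j}(\Pi_{n-1-k}M)$ is a Minkowski polynomial of degree $jk$ in $M$ --- concretely, via $W_{n-j}(Q)=V(Q,j;B^n,\ldots,B^n)$ ($n-j$ copies of $B^n$) and the cosine--transform representation $h_{\Pi_{n-1-k}M}(u)=\tfrac12\int_{S^{n-1}}|u\cdot v|\,dS_{k}(M,v)$, where $S_{k}(M,\cdot)$ is the $k$-th order area measure of $M$ --- and then invoke the Minkowski and Aleksandrov--Fenchel inequalities for mixed volumes and mixed area measures. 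It is precisely the bookkeeping of how the degree $k$ of $\Pi_{n-1-k}$ interacts with the degree $j$ of $W_{n-j}$ that produces the exponent $1/(jk)$; carrying this out --- the heart of Lutwak's argument in \cite{BMLutwak4} --- is the main obstacle, and the equality discussion inside the Aleksandrov--Fenchel step is the delicate part.

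For the equality characterization I would trace equality back through the two steps. Equality in the asserted inequality forces equality in the last step, which by the equality cases of the mixed-volume inequalities forces $\Pi_{n-1-k}K$ and $\Pi_{n-1-k}L$ to be homothetic, and it also forces equality in the pointwise inequality above for $\mathcal{H}^{n-1}$-almost every $u$, i.e.\ $K|u^{\bot}$ and $L|u^{\bot}$ homothetic for almost every $u$. Combining these with the known uniqueness results for mixed projection bodies (as in \cite{BMLutwak4}) yields that $K$ and $L$ are homothetic. The converse is immediate, since if $K$ and $L$ are homothetic then so are all of their projections and all of their mixed projection bodies, and every inequality used becomes an equality.
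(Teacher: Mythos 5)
First, a remark on the comparison you were asked to survive: the paper does not prove Proposition \ref{lemma4.4} at all --- it is quoted from Lutwak \cite{BMLutwak4} and used as a black box as input to the $L_p$ transference principle --- so there is no in-paper argument to measure your proposal against, and it must be judged on its own terms. Your preliminary structure is correct and well organized: projections onto $u^{\bot}$ commute with Minkowski addition; $\Pi_{n-1-k}$ is monotone and homogeneous of degree $k$; the pointwise inequality $h_{\Pi_{n-1-k}(K+L)}(u)^{1/k}\ge h_{\Pi_{n-1-k}K}(u)^{1/k}+h_{\Pi_{n-1-k}L}(u)^{1/k}$ does follow from the Brunn--Minkowski inequality for quermassintegrals inside $u^{\bot}$; and, most importantly, you correctly diagnose that feeding this into monotonicity of $W_{n-j}$ plus its Brunn--Minkowski inequality only yields the exponent $1/j$, which for $k\ge 2$ is strictly weaker than the asserted $1/(jk)$.

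The difficulty is that the step which actually produces the exponent $1/(jk)$ is never carried out: you describe the framework (polynomiality of degree $jk$, the cosine-transform representation, Aleksandrov--Fenchel) and then state that executing it ``is the main obstacle.'' That obstacle \emph{is} the proposition. Concretely, the missing argument runs as follows. By multilinearity of mixed volumes in $u^{\bot}$ one has the Minkowski decomposition $\Pi_{n-1-k}(K+L)=\sum_{i=0}^{k}\binom{k}{i}\,\Pi_i^{*}$, where $\Pi_i^{*}$ denotes the mixed projection body built from $i$ copies of $K$, $k-i$ copies of $L$, and $n-1-k$ copies of $B^n$. Applying the Brunn--Minkowski inequality for $W_{n-j}$ to this $(k+1)$-term sum gives $W_{n-j}(\Pi_{n-1-k}(K+L))^{1/j}\ge\sum_{i=0}^{k}\binom{k}{i}W_{n-j}(\Pi_i^{*})^{1/j}$, and the exponent $1/(jk)$ then emerges only after invoking the genuinely nontrivial Aleksandrov--Fenchel-type estimate $W_{n-j}(\Pi_i^{*})^{k}\ge W_{n-j}(\Pi_{n-1-k}K)^{i}\,W_{n-j}(\Pi_{n-1-k}L)^{k-i}$ for the cross terms, which makes the right-hand side collapse to the $k$-th power of $W_{n-j}(\Pi_{n-1-k}K)^{1/(jk)}+W_{n-j}(\Pi_{n-1-k}L)^{1/(jk)}$. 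None of this is proved, or even precisely stated, in your proposal. The equality characterization is in the same situation: it must be traced through the equality cases of the Brunn--Minkowski and Aleksandrov--Fenchel steps above and then through a uniqueness theorem for bodies whose projections are almost all homothetic, and your write-up defers all of this to \cite{BMLutwak4}. (A small additional inaccuracy: for $k=1$ your pointwise inequality is always an equality, since $W^{(n-1)}_{n-2}$ is Minkowski-additive, so ``equality precisely when the projections are homothetic'' fails there; this is harmless because, as you note, $k=1$ is the easy case.) As it stands, the proposal is an accurate road map to Lutwak's proof rather than a proof.
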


From Proposition \ref{lemma4.4} and the $L_p$ transference
principle, we  obtain the following result.

\begin{theorem}\label{theorem4.4}
Suppose $K,L\in\mathcal{K}^n_o$, $j\in\{1,\ldots,n\}$,
$k\in\{1,\ldots,n-1\}$, and $p\in (1,\infty)$. Then
\begin{equation}\label{LpprojectionBMineq1}
{W_{n - j}}{\left( {{\Pi _{n - 1 - k}}(K +_p L)}
\right)^{\frac{p}{{jk}}}} \ge {W_{n - j}}{\left( {{\Pi _{n - 1 -
k}}K} \right)^{\frac{p}{{jk}}}} + {W_{n - j}}{\left( {{\Pi _{n - 1 -
k}}L} \right)^{\frac{p}{{jk}}}},
\end{equation}
with equality if and only if $K$ and $L$ are dilates.
\end{theorem}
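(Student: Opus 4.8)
The plan is to deduce Theorem~\ref{theorem4.4} from the $L_p$ transference principle, exactly in the spirit of the proofs of Theorems~\ref{theorem4.1} and~\ref{theorem4.3}. Fix $j\in\{1,\ldots,n\}$, $k\in\{1,\ldots,n-1\}$, and for $K\in\mathcal{K}^n$ set
\[ F(K)=W_{n-j}\bigl(\Pi_{n-1-k}K\bigr)^{\frac{1}{jk}}. \]
First I would check the hypotheses of the transference principle for $F$. \emph{Positive homogeneity}: since $h_{\Pi_{n-1-k}K}(u)=W^{(n-1)}_{n-1-k}(K|u^\bot)$ and the $(n-1-k)$th quermassintegral of an $(n-1)$-dimensional body is positively homogeneous of degree $k$, one has $\Pi_{n-1-k}(\lambda K)=\lambda^k\Pi_{n-1-k}K$ for $\lambda>0$; combined with the degree-$j$ homogeneity of $W_{n-j}$ on $\mathcal{K}^n$, this gives $F(\lambda K)=\lambda F(K)$. \emph{Monotonicity}: if $K\subseteq L$, then $K|u^\bot\subseteq L|u^\bot$ for all $u\in S^{n-1}$, so monotonicity of quermassintegrals yields $h_{\Pi_{n-1-k}K}\le h_{\Pi_{n-1-k}L}$, i.e.\ $\Pi_{n-1-k}K\subseteq\Pi_{n-1-k}L$, and then monotonicity of $W_{n-j}$ gives $F(K)\le F(L)$. \emph{Concavity} of $F$ is precisely Proposition~\ref{lemma4.4}. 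Finally, $F$ is \emph{translation invariant}, since $(K+x)|u^\bot$ is a translate of $K|u^\bot$ and quermassintegrals are translation invariant; and $F$ is strictly positive on $\mathcal{K}^n_o$, because $\Pi_{n-1-k}K$ is then a full-dimensional convex body.

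With these properties established, Corollary~\ref{cor3.1} applied to $F$ yields inequality (\ref{LpprojectionBMineq1}) for all $K,L\in\mathcal{K}^n_o$ and $p\in(1,\infty)$. For the equality case I would appeal to Theorem~\ref{theorem3.3}. The equality clause of Proposition~\ref{lemma4.4} states that $F(K+L)=F(K)+F(L)$ holds if and only if $K$ and $L$ are homothetic; since $F$ is positively homogeneous and concave, this is equivalent to saying that $F_{1;K,L}$ is affine if and only if $K$ and $L$ are homothetic, which is exactly assertion~(1) of Theorem~\ref{theorem3.3}. As $F$ is also translation invariant, positively homogeneous, increasing and concave, Theorem~\ref{theorem3.3} gives assertion~(2): $F_{p;K,L}$ is affine if and only if $K$ and $L$ are dilates. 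Unwinding the normalization from the proof of Corollary~\ref{cor3.1} (namely $K+_pL=(1-\alpha)\cdot_p\bigl((1-\alpha)^{-1/p}K\bigr)+_p\alpha\cdot_p\bigl(\alpha^{-1/p}L\bigr)$, together with the fact that a concave function meeting its chord at an interior point is affine on $[0,1]$), equality in (\ref{LpprojectionBMineq1}) then forces $K$ and $L$ to be dilates, while the converse is the direct computation from the proof of Theorem~\ref{theorem3.1}.

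The routine verifications (homogeneity exponents, monotonicity under inclusion, translation invariance) are entirely mechanical; the only place calling for attention is the equality analysis, specifically the need to repackage Proposition~\ref{lemma4.4}'s homothety condition as assertion~(1) of Theorem~\ref{theorem3.3} --- which is why translation invariance of $F$ is a genuine ingredient rather than a formality. If one prefers to bypass Theorem~\ref{theorem3.3}, an alternative is to prove that $F$ is \emph{strictly} increasing on $\mathcal{K}^n_o$ and invoke Theorem~\ref{theorem3.2}: when $K\subsetneq L$, continuity of support functions produces an open set of directions $u$ with $K|u^\bot\subsetneq L|u^\bot$, and since $n-1-k\le n-2$, Lemma~\ref{lemma2.3} applied inside $u^\bot$ (or $W^{(n-1)}_0=V_{n-1}$ when $k=n-1$) shows $W^{(n-1)}_{n-1-k}$ is strictly monotone there, whence $\Pi_{n-1-k}K\subsetneq\Pi_{n-1-k}L$; a second application of Lemma~\ref{lemma2.3} then gives $F(K)<F(L)$. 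In this route the delicate point is exactly this propagation of strict monotonicity through the two successive averaging operations.
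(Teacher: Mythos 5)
Your proposal is correct and follows essentially the same route as the paper: verify that $F=W_{n-j}(\Pi_{n-1-k}(\cdot))^{1/jk}$ is positively homogeneous, increasing, translation invariant, strictly positive, and concave (the last via Proposition~\ref{lemma4.4}), then invoke the $L_p$ transference principle together with Theorem~\ref{theorem3.3} for the equality case. Your extra care in unwinding the normalization of Corollary~\ref{cor3.1} (and the optional alternative via strict monotonicity and Theorem~\ref{theorem3.2}) goes slightly beyond what the paper spells out, but does not constitute a different approach.
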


\begin{proof}
Some facts about mixed projection bodies are in order.

First, for each $K\in\mathcal{K}^n$, the compact convex set
$\Pi_{n-1-k} K$ is a convex body. Indeed,  for all $u\in S^{n-1}$,
since $K|u^\bot$ is $(n-1)$-dimensional, it follows that
$W^{(n-1)}_{n-1-k}(K|u^\bot)>0$, i.e., $h_{\Pi_{n-1-k}K}(u)>0$.

Second, $\Pi_{n-1-k}(\lambda K)=\lambda^k\Pi_{n-1-k} K$, for all
$\lambda>0$. Indeed, for all $u\in S^{n-1}$,
\begin{align*}
{h_{{\Pi _{n - 1 - k}}(\lambda K)}}(u) &= W_{n - 1 - k}^{(n - 1)}\left( {(\lambda K)|{u^ \bot }} \right) \\
&= {\lambda ^k}W_{n - 1 - k}^{(n - 1)}\left( {K|{u^ \bot }} \right) \\
&= {\lambda ^k}{h_{{\Pi _{n - 1 - k}}(\lambda K)}}(u).
\end{align*}

Third, if $K,L\in\mathcal{K}^n$ and $K\subseteq L$, then
$\Pi_{n-1-k} K\subseteq \Pi_{n-1-k} L$. Indeed, for all $u\in
S^{n-1}$, since $K|u^\bot\subseteq L|u^\bot$,  it follows that
$W_{n-1-k}^{(n-1)}(K|u^\bot) \le W_{n-1-k}^{(n-1)}(L|u^\bot)$, i.e.,
$h_{\Pi_{n-1-k} K}(u)\le h_{\Pi_{n-1-k} L}(u)$.

Fourth, ${\Pi _{n - 1 - k}}(K + x) = {\Pi _{n - 1 - k}}K$, for all
$x\in\mathbb{R}^n$. Indeed, for all $u\in S^{n-1}$,
\begin{align*}
{h_{{\Pi _{n - 1 - k}}(K + x)}}(u) &= W_{n - 1 - k}^{(n - 1)}\left( {(K + x)|{u^ \bot }} \right) \\
&= W_{n - 1 - k}^{(n - 1)}\left( {K|{u^ \bot } + x|{u^ \bot }} \right) \\
&= W_{n - 1 - k}^{(n - 1)}\left( {K|{u^ \bot }} \right) \\
&= {h_{{\Pi _{n - 1 - k}}K}}(u).
\end{align*}

Hence, the functional $F =W_{n-j}(\Pi_{n-1-k}(\cdot))^{{1}/{jk}}$
over $\mathcal{K}^n$ is strictly positive, positively homogeneous,
increasing, and translation invariant. Meanwhile, Proposition
\ref{lemma4.4} implies that $F$ is concave.

Hence,  from the $L_{p}$ transference principle and Proposition
\ref{lemma4.4} together with Theorem \ref{theorem3.3}, we obtain the
theorem.
\end{proof}

\subsection{An application to capacities}

The $q$-capacity of a convex body $K$ in $\mathbb{R}^n$, for $1\le
q<n$, is
\[{\rm Cap}_q(K) = \inf \left\{ {\int_{{\mathbb{R}^n}} {|\nabla f|^qdx} } \right\},\]
where the infimum is taken over all nonnegative functions $f$ such
that $f\in L^{\frac{nq}{n-q}}(\mathbb{R}^n)$, $\nabla f\in
L^q(\mathbb{R}^n;\mathbb{R}^n)$, and $K$ is contained in the
interior of $\{x: f(x)\ge 1\}$.

The following is the remarkable capacitary Brunn-Minkowski
inequality.

\begin{proposition}\label{lemma4.5}
Suppose $K,L\in\mathcal{K}^n$, and $1\le q<n$. Then
\begin{equation}\label{capacitaryBMineq}
{\rm Cap}_q{(K + L)^{\frac{1}{{n - q}}}} \ge {\rm
Cap}_q{(K)^{\frac{1}{{n - q}}}} + {\rm Cap}_q{(L)^{\frac{1}{{n - q}}}},
\end{equation}
with equality if and only if $K$ and $L$ are homothetic.
\end{proposition}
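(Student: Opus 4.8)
Inequality~(\ref{capacitaryBMineq}) is a classical and deep theorem: the Newtonian case $q=2$ is due to Borell~\cite{BMineqBorell}, its equality characterization to Caffarelli, Jerison and Lieb~\cite{BMineqCaffarelliJerison}, and the whole range $1\le q<n$ to Colesanti and Salani~\cite{BMineqColesanti1}. The plan is therefore not to invent a proof but to recall and, where convenient, adapt the argument of~\cite{BMineqColesanti1}. The first step is a reduction: since ${\rm Cap}_q$ is translation invariant, monotone under inclusion, and homogeneous of degree $n-q$ (replacing $f$ by $f(\cdot/\lambda)$ in the variational definition multiplies $\int|\nabla f|^q$ by $\lambda^{n-q}$), the standard homogeneity normalization used in the proof of Theorem~\ref{theorem3.1} turns (\ref{capacitaryBMineq}) into the equivalent normalized statement: \emph{if ${\rm Cap}_q(K)={\rm Cap}_q(L)=1$, then ${\rm Cap}_q((1-\alpha)K+\alpha L)\ge 1$ for all $\alpha\in(0,1)$}, and it matches the equality cases.

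Next I would set up the potential-theoretic machinery. For a convex body $M$, let $u_M$ be the $q$-capacitary potential, i.e. the solution of $\operatorname{div}(|\nabla u_M|^{q-2}\nabla u_M)=0$ on $\mathbb{R}^n\setminus M$ with $u_M=1$ on $\partial M$ and $u_M\to 0$ at infinity; then ${\rm Cap}_q(M)=\int_{\mathbb{R}^n\setminus M}|\nabla u_M|^q\,dx$. Two facts are invoked. First, the convexity of capacitary potentials of convex bodies (a result of Lewis, also used in~\cite{BMineqColesanti1}): the superlevel sets $M_t:=\{u_M\ge t\}$ are nested convex bodies with $M_1=M$. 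Second, since $|\nabla u_M|^{q-2}\nabla u_M$ is divergence free, the flux $\int_{\{u_M=t\}}|\nabla u_M|^{q-1}\,d\mathcal{H}^{n-1}$ is independent of $t\in(0,1)$, so by the coarea formula it equals ${\rm Cap}_q(M)$.

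The core is a comparison argument. Put $K_\alpha=(1-\alpha)K+\alpha L$ and let $w\colon\mathbb{R}^n\to[0,1]$ be the function whose superlevel sets are the Minkowski combinations $(1-\alpha)K_t+\alpha L_t$; by the convexity of the level sets this is well defined and $\{w\ge 1\}=K_\alpha$. Using the $q$-Laplace equation one shows that $w$ is a subsolution with the same boundary data as $u_{K_\alpha}$, so the comparison principle gives $u_{K_\alpha}\ge w$, i.e. $\{u_{K_\alpha}\ge t\}\supseteq(1-\alpha)K_t+\alpha L_t$; simultaneously one derives the pointwise gradient-concavity estimate $|\nabla u_{K_\alpha}(x)|^{-1}\ge(1-\alpha)|\nabla u_K(x_0)|^{-1}+\alpha|\nabla u_L(x_1)|^{-1}$ at aligned boundary points $x=(1-\alpha)x_0+\alpha x_1$ of the corresponding level sets (an identity for concentric balls, consistent with the equality case being homothety). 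Feeding the level-set inclusion and this estimate into the flux identity, parametrizing the convex level sets by their Gauss maps, and invoking the classical Brunn--Minkowski (equivalently mixed-surface-area) inequality for the bodies $K_t,L_t$ — the same concavity phenomenon that drives the rest of the paper — yields ${\rm Cap}_q(K_\alpha)\ge 1$ in the normalized case. For equality, tracking the equality cases above forces each pair $K_t,L_t$ to be homothetic with a ratio independent of $t$, hence $K$ and $L$ homothetic; conversely homothety gives equality by translation invariance and $(n-q)$-homogeneity of ${\rm Cap}_q$.

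The main obstacle is exactly this core comparison step: showing that the Minkowski-combination function $w$ is a $q$-subsolution, and establishing the gradient-concavity estimate, are genuinely PDE-theoretic and form the technical heart of~\cite{BMineqBorell,BMineqColesanti1}, with the rigidity needed for the equality case being the main contribution of~\cite{BMineqCaffarelliJerison}. Everything else — the homogeneity reduction, the flux identity, and the passage to the classical Brunn--Minkowski inequality on the level sets — is comparatively routine convex-geometric bookkeeping.
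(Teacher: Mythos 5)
The paper gives no proof of Proposition~\ref{lemma4.5} at all: it is quoted from the literature, with the inequality attributed to Borell for $q=2$, to Colesanti and Salani for $1<q<n$, the equality case for $q=2$ to Caffarelli, Jerison and Lieb, and the case $q=1$ reduced to the surface-area Brunn--Minkowski inequality via ${\rm Cap}_1(K)=\mathcal{H}^{n-1}(\partial K)=nW_1(K)$. Your proposal takes essentially the same route --- deferring the result to the same references --- and the additional sketch you give (capacitary potential, convexity of level sets, the flux identity, the Minkowski-combination subsolution and the comparison principle) is a broadly faithful outline of the Colesanti--Salani argument, with the genuinely hard steps honestly flagged rather than claimed.

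One caveat: your potential-theoretic sketch only makes sense for $1<q<n$, since for $q=1$ there is no $q$-capacitary potential solving a $1$-Laplace equation in the usual sense, and Colesanti--Salani's theorem is stated for $1<p<n$. The proposition as stated includes $q=1$, and the paper covers that endpoint separately through the identity ${\rm Cap}_1(K)=nW_1(K)$ and the classical Brunn--Minkowski inequality for surface area (with its homothety equality case). You should add that one-line reduction to make your plan cover the full range $1\le q<n$.
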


Borell \cite{BMineqBorell} first established
(\ref{capacitaryBMineq}) for the case $q=2$ (the Newtonian
capacity), and the equality condition was proved by Caffarelli,
Jerison, and Lieb \cite{BMineqCaffarelliJerison}. When $1<q<n$, the
 inequality  was proved by Colesanti and Salani
\cite{BMineqColesanti1}. The case $q=1$ is just the Brunn-Minkowski
inequality for surface area of convex bodies:
\[W_1(K+L)^\frac{1}{n-1} \ge W_1(K)^\frac{1}{n-1} + W_1(L)^\frac{1}{n-1}, \quad K,L\in\mathcal{K}^n,\]
due to the fact ${\rm Cap}_1(K)=\mathcal{H}^{n-1}(\partial K)=n
W_1(K)$, for $K\in\mathcal{K}^n$.

For more information on the role of capacity in the Brunn-Minkowski
theory and its dual, we refer to Gardner and Hartenstine
\cite{Gardner3} and the references within.

From Proposition \ref{lemma4.5} and  the $L_p$ transference principle, we
obtain the following result.
\begin{theorem}\label{theorem4.5}
Suppose $K,L\in\mathcal{K}^n_o$, $1\le q<n$, and $1<p<\infty$. Then
\[{\rm Cap}_q{(K{ + _p}L)^{\frac{p}{{n - q}}}} \ge {\rm Cap}_q{(K)^{\frac{p}{{n - q}}}} + {\rm Cap}_q{(L)^{\frac{p}{{n - q}}}}, \]
with equality if and only if $K$ and $L$ are dilates.
\end{theorem}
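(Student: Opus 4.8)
The plan is to apply the $L_p$ transference principle (Theorem~\ref{theorem3.1}) together with the capacitary Brunn--Minkowski inequality (Proposition~\ref{lemma4.5}) to the functional
\[
F(K) = {\rm Cap}_q(K)^{\frac{1}{n-q}}, \qquad K \in \mathcal{K}^n.
\]
First I would verify that $F$ has the three structural properties required by Theorem~\ref{theorem3.1}. Positive homogeneity: since ${\rm Cap}_q$ is homogeneous of degree $n-q$ (this follows from the scaling $f \mapsto f(x/\lambda)$ in the variational definition, which multiplies $\int |\nabla f|^q\,dx$ by $\lambda^{n-q}$), we get $F(\lambda K) = \lambda F(K)$ for $\lambda > 0$. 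Monotonicity: if $K \subseteq L$ then any test function admissible for $L$ is admissible for $K$, so ${\rm Cap}_q(K) \le {\rm Cap}_q(L)$, hence $F$ is increasing. Concavity of $F$ is precisely the content of Proposition~\ref{lemma4.5}, rewritten as $F(K+L) \ge F(K) + F(L)$, which combined with positive homogeneity gives $F((1-\alpha)K + \alpha L) \ge (1-\alpha)F(K) + \alpha F(L)$.

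With these properties in hand, Corollary~\ref{cor3.1} applied to $F$ immediately yields
\[
{\rm Cap}_q(K +_p L)^{\frac{p}{n-q}} \ge {\rm Cap}_q(K)^{\frac{p}{n-q}} + {\rm Cap}_q(L)^{\frac{p}{n-q}}
\]
for $K, L \in \mathcal{K}^n_o$ and $1 < p < \infty$, which is the desired inequality. For the equality characterization I would invoke Theorem~\ref{theorem3.3}: it suffices to check that $F$ is translation invariant and that equality in the classical inequality (Proposition~\ref{lemma4.5}) forces homotheties. Translation invariance of ${\rm Cap}_q$ is clear from the definition, since translating $K$ and correspondingly translating the test functions $f$ leaves $\int |\nabla f|^q\,dx$ unchanged. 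The classical equality condition in Proposition~\ref{lemma4.5} is stated there: equality holds if and only if $K$ and $L$ are homothetic, for all $1 \le q < n$. Thus Theorem~\ref{theorem3.3} applies and gives that equality in the $L_p$ version holds if and only if $K$ and $L$ are dilates.

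The only genuinely substantive points are the verification that ${\rm Cap}_q$ is positively homogeneous of degree $n-q$, monotone under inclusion, and translation invariant --- but each of these is an elementary change of variables in the variational definition and poses no real obstacle. The deepest input, the concavity of ${\rm Cap}_q^{1/(n-q)}$ and its equality case, is imported wholesale from Proposition~\ref{lemma4.5} (i.e., from the work of Borell, Caffarelli--Jerison--Lieb, and Colesanti--Salani). So the proof is essentially a clean bookkeeping argument: assemble the hypotheses of Theorem~\ref{theorem3.1}, Corollary~\ref{cor3.1}, and Theorem~\ref{theorem3.3} from known facts about $q$-capacity, then quote those results. I expect no real difficulty; the value of the argument lies in showing how smoothly the transference principle converts the classical result into its $L_p$ form, including the sharp equality condition.
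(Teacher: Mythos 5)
Your proposal is correct and follows essentially the same route as the paper: verify that $F={\rm Cap}_q(\cdot)^{1/(n-q)}$ is positively homogeneous, increasing, translation invariant, and (by Proposition~\ref{lemma4.5}) concave, then apply the $L_p$ transference principle for the inequality and Theorem~\ref{theorem3.3} for the equality case. The only cosmetic difference is that the paper cites Evans--Gariepy for the basic properties of ${\rm Cap}_q$ where you verify them directly by scaling and test-function arguments.
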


\begin{proof}
From Evans and Gariepy \cite[pp. 150-151]{BookEvansGariepy}, the
functional
\[F= {\rm Cap}_q (\cdot)^{\frac{1}{n-q}}: \mathcal{K}^n\to (0,\infty)\]
is positively homogeneous, increasing, and translation
invariant. Meanwhile, Proposition \ref{lemma4.5} implies that $F$ is
concave.

Hence,  from the $L_{p}$ transference principle and Proposition
\ref{lemma4.5} together with Theorem \ref{theorem3.3}, Theorem
\ref{theorem4.5} is obtained.
\end{proof}

\begin{center}
\textbf{Acknowledgement}
\end{center}

\vskip 10pt

We thank the referee very much for the extremely thorough and
helpful report on our manuscript.

\bibliographystyle{amsplain}

\end{document}